\DeclareMathOperator{\spn}{span}
\DeclareMathOperator{\gKer}{gker}
\DeclareMathOperator{\supp}{supp}
\begin{document}

\newtheorem{theorem}{Theorem}
\newtheorem{lemma}{Lemma}
\newtheorem{proposition}{Proposition}
\newtheorem{example}{Example}
\newtheorem{exercise}{Exercise}
\newtheorem{definition}{Definition}
\newtheorem{corollary}{Corollary}
\newtheorem{notation}{Notation}
\newtheorem{claim}{Claim}

\newtheorem{dif}{Definition}

 \newtheorem{thm}{Theorem}[section]
 \newtheorem{cor}[thm]{Corollary}
 \newtheorem{lem}[thm]{Lemma}
 \newtheorem{prop}[thm]{Proposition}
 \theoremstyle{definition}
 \newtheorem{defn}[thm]{Definition}
 \theoremstyle{remark}
 \newtheorem{rem}[thm]{Remark}
 \newtheorem*{ex}{Example}
 \numberwithin{equation}{section}
\newtheorem*{remark*}{Remark}

\newcommand{\vertiii}[1]{{\left\vert\kern-0.25ex\left\vert\kern-0.25ex\left\vert #1
    \right\vert\kern-0.25ex\right\vert\kern-0.25ex\right\vert}}

\newcommand{\R}{{\mathbb R}}
\newcommand{\C}{{\mathbb C}}
\newcommand{\U}{{\mathcal U}}
\newcommand{\norm}[1]{\left\|#1\right\|}
\renewcommand{\(}{\left(}
\renewcommand{\)}{\right)}
\renewcommand{\[}{\left[}
\renewcommand{\]}{\right]}
\newcommand{\f}[2]{\frac{#1}{#2}}
\newcommand{\im}{i}
\newcommand{\cl}{{\mathcal L}}
\newcommand{\ck}{{\mathcal K}}

\newcommand{\al}{\alpha}
\newcommand{\vro}{\varrho}
\newcommand{\be}{\beta}
\newcommand{\wh}[1]{\widehat{#1}}
\newcommand{\ga}{\gamma}
\newcommand{\Ga}{\Gamma}
\newcommand{\de}{\delta}
\newcommand{\ben}{\beta_n}
\newcommand{\De}{\Delta}
\newcommand{\ve}{\varepsilon}
\newcommand{\ze}{\zeta}
\newcommand{\Th}{\Theta}
\newcommand{\ka}{\kappa}
\newcommand{\la}{\lambda}
\newcommand{\laj}{\lambda_j}
\newcommand{\lak}{\lambda_k}
\newcommand{\La}{\Lambda}
\newcommand{\si}{\sigma}
\newcommand{\Si}{\Sigma}
\newcommand{\vp}{\varphi}
\newcommand{\om}{\omega}
\newcommand{\Om}{\Omega}
\newcommand{\ra}{\rightarrow}

\newcommand{\ro}{{\mathbb  R}}
\newcommand{\rn}{{\mathbb  R}^n}
\newcommand{\rd}{{\mathbb  R}^d}
\newcommand{\rmm}{{\mathbb  R}^m}
\newcommand{\rone}{\mathbb  R}
\newcommand{\rtwo}{\mathbb  R^2}
\newcommand{\rthree}{\mathbb  R^3}
\newcommand{\rfour}{\mathbb  R^4}
\newcommand{\ronen}{{\mathbb  R}^{n+1}}
\newcommand{\ku}{\mathbb  u}
\newcommand{\kw}{\mathbb  w}
\newcommand{\kf}{\mathbb  f}
\newcommand{\kz}{\mathbb  z}

\newcommand{\N}{\mathbb  N}

\newcommand{\tn}{\mathbb  T^n}
\newcommand{\tone}{\mathbb  T^1}
\newcommand{\ttwo}{\mathbb  T^2}
\newcommand{\tthree}{\mathbb  T^3}
\newcommand{\tfour}{\mathbb  T^4}

\newcommand{\zn}{\mathbb  Z^n}
\newcommand{\zp}{\mathbb  Z^+}
\newcommand{\zone}{\mathbb  Z^1}
\newcommand{\zz}{\mathbb  Z}
\newcommand{\ztwo}{\mathbb  Z^2}
\newcommand{\zthree}{\mathbb  Z^3}
\newcommand{\zfour}{\mathbb  Z^4}

\newcommand{\hn}{\mathbb  H^n}
\newcommand{\hone}{\mathbb  H^1}
\newcommand{\htwo}{\mathbb  H^2}
\newcommand{\hthree}{\mathbb  H^3}
\newcommand{\hfour}{\mathbb  H^4}

\newcommand{\cone}{\mathbb  C^1}
\newcommand{\ctwo}{\mathbb  C^2}
\newcommand{\cthree}{\mathbb  C^3}
\newcommand{\cfour}{\mathbb  C^4}
\newcommand{\dpr}[2]{\langle #1,#2 \rangle}

\newcommand{\sn}{\mathbb  S^{n-1}}
\newcommand{\sone}{\mathbb  S^1}
\newcommand{\stwo}{\mathbb  S^2}
\newcommand{\sthree}{\mathbb  S^3}
\newcommand{\sfour}{\mathbb  S^4}

\newcommand{\lp}{L^{p}}
\newcommand{\lppr}{L^{p'}}
\newcommand{\lqq}{L^{q}}
\newcommand{\lr}{L^{r}}
\newcommand{\echi}{(1-\chi(x/M))}
\newcommand{\chip}{\chi'(x/M)}

\newcommand{\wlp}{L^{p,\infty}}
\newcommand{\wlq}{L^{q,\infty}}
\newcommand{\wlr}{L^{r,\infty}}
\newcommand{\wlo}{L^{1,\infty}}

\newcommand{\lprn}{L^{p}(\rn)}
\newcommand{\lptn}{L^{p}(\tn)}
\newcommand{\lpzn}{L^{p}(\zn)}
\newcommand{\lpcn}{L^{p}(\cn)}
\newcommand{\lphn}{L^{p}(\cn)}

\newcommand{\lprone}{L^{p}(\rone)}
\newcommand{\lptone}{L^{p}(\tone)}
\newcommand{\lpzone}{L^{p}(\zone)}
\newcommand{\lpcone}{L^{p}(\cone)}
\newcommand{\lphone}{L^{p}(\hone)}

\newcommand{\lqrn}{L^{q}(\rn)}
\newcommand{\lqtn}{L^{q}(\tn)}
\newcommand{\lqzn}{L^{q}(\zn)}
\newcommand{\lqcn}{L^{q}(\cn)}
\newcommand{\lqhn}{L^{q}(\hn)}

\newcommand{\lo}{L^{1}}
\newcommand{\lt}{L^{2}}
\newcommand{\li}{L^{\infty}}
\newcommand{\beqn}{\begin{eqnarray*}}
\newcommand{\eeqn}{\end{eqnarray*}}
\newcommand{\pplus}{P_{Ker[\cl_+]^\perp}}

\newcommand{\co}{C^{1}}
\newcommand{\ci}{\mathcal I}
\newcommand{\coi}{C_0^{\infty}}

\newcommand{\ca}{\mathcal A}
\newcommand{\cs}{\mathcal S}
\newcommand{\cm}{\mathcal M}
\newcommand{\cf}{\mathcal F}
\newcommand{\cb}{\mathcal B}
\newcommand{\ce}{\mathcal E}
\newcommand{\cd}{\mathcal D}
\newcommand{\cn}{\mathbb N}
\newcommand{\cz}{\mathcal Z}
\newcommand{\crr}{\mathbb  R}
\newcommand{\cc}{\mathcal C}
\newcommand{\ch}{\mathcal H}
\newcommand{\cq}{\mathcal Q}
\newcommand{\cp}{\mathcal P}
\newcommand{\cx}{\mathcal X}
\newcommand{\eps}{\epsilon}

\newcommand{\pv}{\textup{p.v.}\,}
\newcommand{\loc}{\textup{loc}}
\newcommand{\intl}{\int\limits}
\newcommand{\iintl}{\iint\limits}
\newcommand{\dint}{\displaystyle\int}
\newcommand{\diint}{\displaystyle\iint}
\newcommand{\dintl}{\displaystyle\intl}
\newcommand{\diintl}{\displaystyle\iintl}
\newcommand{\liml}{\lim\limits}
\newcommand{\suml}{\sum\limits}
\newcommand{\ltwo}{L^{2}}
\newcommand{\supl}{\sup\limits}
\newcommand{\df}{\displaystyle\frac}
\newcommand{\p}{\partial}
\newcommand{\Ar}{\textup{Arg}}
\newcommand{\abssigk}{\widehat{|\si_k|}}
\newcommand{\ed}{(1-\p_x^2)^{-1}}
\newcommand{\tT}{\tilde{T}}
\newcommand{\tV}{\tilde{V}}
\newcommand{\wt}{\widetilde}
\newcommand{\Qvi}{Q_{\nu,i}}
\newcommand{\sjv}{a_{j,\nu}}
\newcommand{\sj}{a_j}
\newcommand{\pvs}{P_\nu^s}
\newcommand{\pva}{P_1^s}
\newcommand{\cjk}{c_{j,k}^{m,s}}
\newcommand{\Bjsnu}{B_{j-s,\nu}}
\newcommand{\Bjs}{B_{j-s}}
\newcommand{\Ly}{\cl_+i^y}
\newcommand{\dd}[1]{\f{\partial}{\partial #1}}
\newcommand{\czz}{Calder\'on-Zygmund}
\newcommand{\chh}{\mathcal H}

\newcommand{\lbl}{\label}
\newcommand{\beq}{\begin{equation}}
\newcommand{\eeq}{\end{equation}}
\newcommand{\beqna}{\begin{eqnarray*}}
\newcommand{\eeqna}{\end{eqnarray*}}
\newcommand{\bp}{\begin{proof}}
\newcommand{\ep}{\end{proof}}
\newcommand{\bprop}{\begin{proposition}}
\newcommand{\eprop}{\end{proposition}}
\newcommand{\bt}{\begin{theorem}}
\newcommand{\et}{\end{theorem}}
\newcommand{\bex}{\begin{Example}}
\newcommand{\eex}{\end{Example}}
\newcommand{\bc}{\begin{corollary}}
\newcommand{\ec}{\end{corollary}}
\newcommand{\bcl}{\begin{claim}}
\newcommand{\ecl}{\end{claim}}
\newcommand{\bl}{\begin{lemma}}
\newcommand{\el}{\end{lemma}}
\newcommand{\dea}{(-\De)^\be}
\newcommand{\naa}{|\nabla|^\be}
\newcommand{\cj}{{\mathcal J}}
\newcommand{\ubb}{{\mathbb  u}}

\title[Traveling waves for fourth order wave equations]{Existence and stability for  traveling waves of fourth order semilinear wave and Schr\"odinger equations}

\author{Vishnu Iyer}
\address{Department of Mathematics,
	University of Alabama - Birmingham, 
	University Hall, Room 4050, 
	1402 10th Avenue South
	Birmingham AL 35294-1241
	 }\email{vpiyer@uab.edu}

\author{Ross Parker}
\address{IDA Center for Communications Research - Princeton,
805 Bunn Dr, 
Princeton, NJ 08540
}\email{r.parker@idaccr.org}
 
\author[Atanas G. Stefanov]{\sc Atanas G. Stefanov}
\address{ Department of Mathematics,
	University of Alabama - Birmingham, 
	University Hall, Room 4005, 
	1402 10th Avenue South
	Birmingham AL 35294-1241
	 }
\email{stefanov@uab.edu}

\subjclass[2010]{Primary 35Q55, 35Q40, 35B35, Secondary 35Q51. }

\keywords{solitons, stability, fourth order wave equation}

\date{\today}
 
\begin{abstract}
 We investigate the existence and spectral stability of traveling wave solutions for a class of fourth-order semilinear wave equations, commonly referred to as beam equations. Using variational methods based on a constrained maximization problem, we establish the existence of smooth, exponentially decaying traveling wave profiles for wavespeeds in the interval $(0, \sqrt{2})$. We derive precise spectral properties of the associated linearized operators and prove a Vakhitov-Kolokolov (VK) type stability criterion that completely characterizes spectral stability. Furthermore, we determine the sharp exponential decay rate of the traveling waves and demonstrate that it matches the decay rate of the Green's function for the linearized operator. Our analysis extends to fourth-order nonlinear Schrödinger equations, for which we establish analogous existence and stability results. The theoretical findings are complemented by numerical computations that verify the stability predictions and reveal the transition from unstable to stable regimes as the wavespeed varies. These results provide a comprehensive mathematical framework for understanding wave propagation phenomena in structural mechanics, particularly suspension bridge models.
\
\end{abstract}

\thanks{ Iyer acknowledges partial support  through a graduate research fellowship from NSF-DMS \# 2204788. Stefanov is partially supported by   NSF-DMS \# 2204788.}

\maketitle

\section{INTRODUCTION}

In this paper, we study the following fourth-order $1 + 1$ wave equation:
\begin{equation}
\label{eq:main}
\begin{cases}
u_{tt} + u_{xxxx} + u - \gamma F(|u|^2)u = 0, & (t, x) \in \mathbb{R}_+ \times \mathbb{R} \\
u(0, x) = u_0(x),
\end{cases}
\end{equation}
where $F$ is a polynomial with non-negative coefficients; $F(r) = \sum_{k=1}^N a_k r^k$ with constants $a_k \geq 0$, and $\gamma > 0$ is a parameter to be determined. This equation belongs to a class commonly known as beam equations, the general form of which can be written as
\begin{equation}
\label{eq:generalbeam}
u_{tt} + u_{xxxx} + f(u) = 0
\end{equation}
for some nonlinearity $f$ that is continuous but not necessarily smooth. Under appropriate boundary conditions, this equation possesses a Hamiltonian structure, with the conserved energy
\begin{equation}
\label{eq:beamH}
H(u) = \int \left(\frac{1}{2} v^2 + \frac{1}{2} (u_{xx})^2 + F(u)\right) dx,
\end{equation}
where $v = u_t$ and $F'(u) = f(u)$. 

This class of equations has physical significance in structural mechanics. McKenna and Walter \cite{McKennaWalter} introduced equation \eqref{eq:generalbeam} with the non-smooth nonlinearity $f(u) = u_+ = \max\{u, 0\}$ as a model for traveling waves on a suspension bridge. A smooth approximation using $f(u) = e^u - 1$ was subsequently considered by Chen and McKenna \cite{ChenMcKenna}. Over the past decade, the existence and stability of solutions for various nonlinearities have been extensively studied through both analytical and numerical approaches (see \cite{DM, McKa, LEV, LEVD, DemPan, DemPan2}).

We are particularly interested in traveling pulse solutions, that is, solutions to \eqref{eq:main} of the form $u(x, t) = \phi(x - ct)$ that are localized at both spatial infinities. Any such traveling wave profile $\phi$ must satisfy the ordinary differential equation
\begin{equation}\label{20b}
\phi_{xxxx} + c^2 \phi_{xx} + \phi - \gamma F(\phi^2)\phi = 0, \quad x \in \mathbb{R}.
\end{equation}

{\bf Remark:} 
In the case of a homogeneous nonlinearity, where $F(x) = x^k$ for some $k > 0$, the parameter $\gamma$ in \eqref{eq:main} is superfluous, as the Lagrange multiplier that appears in our variational construction can be scaled away. However, for general polynomial nonlinearities $F$, due to the lack of scaling symmetry, we are forced to account for this quantity, which is intrinsically tied to the solution of our constrained maximization problem.

Beyond establishing the existence of $\phi$ and determining its properties, we analyze the linearized problem to characterize the spectral stability of these waves. We define the self-adjoint operators $L_\pm: H^4(\mathbb{R}) \to L^2(\mathbb{R})$ that naturally arise during the linearization of the PDE:
\begin{align*}
L_- &= \partial_{xxxx} + c^2 \partial_{xx} + 1 - \gamma F(\phi^2), \\
L_+ &= \partial_{xxxx} + c^2 \partial_{xx} + 1 - \gamma F(\phi^2) - 2\gamma F'(\phi^2)\phi^2.
\end{align*}

To study the stability of the traveling wave $\phi(x - ct)$, we consider another solution $u(x, t)$ of \eqref{eq:main} following the ansatz
$
u(x, t) = \phi(x - ct) + v(t, x - ct).
$
Substituting this into \eqref{eq:main}, simplifying, and linearizing by discarding terms of order $O(v^2)$ and higher, we arrive at the equation
$$
v_{tt} - 2cv_{ty} + c^2 v_{yy} + v_{yyyy} + v -\gamma F(\phi^2)v - 2\gamma F'(\phi^2)\phi^2 v = 0,
$$
where $y = x - ct$ is the traveling-frame spatial variable. This equation can be expressed in terms of the linearized operator $L_+$ as
$$
v_{tt} - 2cv_{ty} + L_+ v = 0.
$$
Setting $w := v_t$, we transform this into the first-order system
$$
\begin{pmatrix} v \\ w \end{pmatrix}_t = \begin{pmatrix} 0 & 1 \\ -L_+ & 2c\partial_y \end{pmatrix} \begin{pmatrix} v \\ w \end{pmatrix} = \begin{pmatrix} 0 & 1 \\ -1 & 2c\partial_y \end{pmatrix} \begin{pmatrix} L_+ & 0 \\ 0 & 1 \end{pmatrix} \begin{pmatrix} v \\ w \end{pmatrix}.
$$

Seeking solutions of the form $\begin{pmatrix} v(t, x) \\ w(t, x) \end{pmatrix} = e^{\lambda t} \begin{pmatrix} v(x) \\ w(x) \end{pmatrix}$ leads to the time-independent eigenvalue problem
\begin{equation}
\label{200b}
\begin{pmatrix} 0 & 1 \\ -1 & 2c\partial_y \end{pmatrix} \begin{pmatrix} L_+ & 0 \\ 0 & 1 \end{pmatrix} \begin{pmatrix} v \\ w \end{pmatrix} = \lambda \begin{pmatrix} v \\ w \end{pmatrix}.
\end{equation}

We say that the traveling wave solution $\phi(x - ct)$ is \emph{spectrally stable} in the context of \eqref{eq:main} if \eqref{200b} admits no non-trivial solution $\begin{pmatrix} v \\ w \end{pmatrix} \neq 0$ corresponding to an eigenvalue $\lambda$ with $\text{Re}\,\lambda > 0$.

As an application of our analysis, we also establish the existence and stability of solitary wave solutions for the fourth-order nonlinear Schrödinger (NLS) equation
\begin{equation}\label{14b}
\begin{cases}
i u_t + u_{xxxx} + \mu u_{xx} - |u|^2 u = 0, & x \in \mathbb{R}, \, t\geq 0 \\
u(0, x) = u_0(x),
\end{cases}
\end{equation}
where $\mu>0$. 
\subsection{Main results}

Our first result establishes that, under appropriate conditions on the wavespeed parameter $c$, solutions to the profile equation \eqref{20b} exist and possess desirable spectral properties.

\begin{theorem}
\label{theo:10b}
Let $0 < c < \sqrt{2}$. Then the profile equation \eqref{20b} has a solution $\phi \in H^\infty(\mathbb{R}) = \bigcap_{j=0}^\infty H^s(\mathbb{R})$. Moreover, the linearized operators $L_\pm$ satisfy the following properties:
\begin{itemize}
\item $L_- \geq 0$, $L_-[\phi] = 0$, $\sigma_{a.c.}(L_-) = [1 - \frac{c^4}{4}, +\infty)$, and there exists a spectral gap property: there exists $\delta > 0$ such that
$$
L_-|_{\{\phi\}^\perp} \geq \delta.
$$
That is, for all $h \perp \phi$, $\langle L_- h, h \rangle \geq \delta \|h\|_{L^2}^2$.

\item $L_+$ has exactly one negative eigenvalue, $L_+[\phi'] = 0$, and $\sigma_{a.c.}(L_+) = [1 - \frac{c^4}{4}, +\infty)$.

\item $L_+$ satisfies the spectral gap property. Specifically, if $\psi_0$ is the eigenfunction corresponding to the unique negative eigenvalue, then
$$
L_+|_{\{\psi_0, \ker[L_+]\}^\perp} \geq \delta.
$$
\end{itemize}
\end{theorem}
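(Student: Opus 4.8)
The plan is to realize $\phi$ as a constrained maximizer and then read off every spectral statement from that variational characterization together with the order relation $L_+\le L_-$. Write $\cl_0:=\partial_x^4+c^2\partial_x^2+1$, the Fourier multiplier with symbol $p(\xi)=\xi^4-c^2\xi^2+1$. Since $\min_\xi p(\xi)=1-\tfrac{c^4}{4}$, the hypothesis $0<c<\sqrt2$ is exactly what forces $p(\xi)\ge 1-\tfrac{c^4}{4}>0$, so that $\langle \cl_0 u,u\rangle\simeq\|u\|_{H^2}^2$. Let $\Phi'=F$ and $B(u)=\tfrac12\Phi(u^2)$, so that $B'(u)=F(u^2)u$. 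I would solve
$$ \sup\Big\{\,\int B(u)\,dx \ :\ \tfrac12\langle \cl_0 u,u\rangle=\lambda\,\Big\},\qquad \lambda>0 . $$
Coercivity bounds any maximizing sequence in $H^2$, and the one-dimensional embedding $H^2(\R)\hookrightarrow L^\infty$ makes every polynomial nonlinearity energy-subcritical. I would then run concentration--compactness: vanishing is excluded because the supremum is strictly positive while vanishing forces $\int B\to 0$, and dichotomy is excluded by strict subadditivity of the level function $\lambda\mapsto m(\lambda)$, coming from the superquadratic growth $B(u)=O(u^4)$. Compactness up to translation yields a maximizer $\phi$, and a Lagrange multiplier gives $\cl_0\phi=\gamma F(\phi^2)\phi$, i.e. \eqref{20b}; pairing with $\phi$ shows $\gamma>0$.

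A bootstrap using that $H^2(\R)$ is an algebra (so $F(\phi^2)\phi\in H^2$ and $\phi=\cl_0^{-1}[\gamma F(\phi^2)\phi]\in H^6$, iterating) upgrades $\phi$ to $H^\infty$. By construction $L_-\phi=\cl_0\phi-\gamma F(\phi^2)\phi=0$, and differentiating \eqref{20b} in $x$ (using that $\cl_0$ has constant coefficients) gives $L_+\phi'=0$. Because $\phi$ and its derivatives decay, multiplication by $\gamma F(\phi^2)$ and by $2\gamma F'(\phi^2)\phi^2$ are relatively compact perturbations of $\cl_0$, so Weyl's theorem yields $\sigma_{\mathrm{ess}}(L_\pm)=\sigma_{\mathrm{ess}}(\cl_0)=[\,1-\tfrac{c^4}{4},\infty)$, which is positive and purely absolutely continuous; in particular $0$ lies strictly below the essential spectrum of both operators.

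The spectral counts follow from the comparison $L_+\le L_-$. The second-order condition at the maximizer reads $\langle L_+ h,h\rangle\ge 0$ for every $h\perp \cl_0\phi$, so $L_+$ is nonnegative on a subspace of codimension one and hence has at most one negative eigenvalue. On the other hand $\langle L_+\phi,\phi\rangle=-2\gamma\int F'(\phi^2)\phi^4\,dx<0$ (here $F'\ge 0$ and $F\not\equiv 0$), so $L_+$ has \emph{exactly} one negative eigenvalue. Since $2\gamma F'(\phi^2)\phi^2\ge 0$ we have $L_+\le L_-$. If $L_-$ had $n_-\ge 1$ negative eigenvalues with eigenfunctions $e_1,\dots,e_{n_-}$, then on $S=\spn\{e_1,\dots,e_{n_-},\phi\}$, of dimension $n_-+1$, one has $\langle L_+ w,w\rangle\le\langle L_- w,w\rangle\le 0$ with strict inequality for $w\notin\spn\{\phi\}$ and $\langle L_+\phi,\phi\rangle<0$; thus $L_+$ is negative definite on $S$, forcing at least $n_-+1\ge 2$ negative eigenvalues of $L_+$, a contradiction. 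Hence $L_-\ge 0$. The same comparison gives simplicity of the kernel: if $w\in\ker L_-$ then $\langle L_+ w,w\rangle=-2\gamma\int F'(\phi^2)\phi^2 w^2\le 0$, strict unless $w$ vanishes on $\{\phi\ne 0\}$; but $\phi$ solves a fourth-order ODE and is nontrivial, so its zeros are isolated and $\{\phi\ne 0\}$ has full measure, whence $w\equiv 0$. Thus $L_+$ is negative definite on $\ker L_-$, so $\dim\ker L_-\le 1$ and $\ker L_-=\spn\{\phi\}$.

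Finally, the spectral gaps: since $0$ is isolated below $\sigma_{\mathrm{ess}}=[1-\tfrac{c^4}{4},\infty)$ and the point spectrum there is finite with finite multiplicities, excising $\spn\{\phi\}$ from $L_-$ and $\spn\{\psi_0\}$ together with $\ker L_+$ from $L_+$ leaves quadratic forms bounded below by $\delta:=\min\{\text{next eigenvalue},\,1-\tfrac{c^4}{4}\}>0$, which is the asserted gap. The main obstacle is twofold. On the analytic side, establishing the strict subadditivity needed to rule out dichotomy is delicate because the non-homogeneous polynomial $F$ carries no scaling symmetry, so the customary scaling proof must be replaced by a direct comparison of the level function. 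On the spectral side, the fourth-order operator $L_-$ admits no maximum principle, so its positivity and non-degeneracy cannot be obtained from Perron--Frobenius and must instead be extracted, as above, from the comparison $L_+\le L_-$ and the Morse-index bound for $L_+$.
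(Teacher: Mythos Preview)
Your existence argument (constrained maximization plus concentration--compactness, then bootstrap to $H^\infty$) matches the paper's exactly, including the mechanism for strict subadditivity: the paper also exploits the superquadratic growth $\beta^2 G(r)<G(\beta^2 r)$ rather than any full scaling symmetry, so the obstacle you flag is real but is handled the same way. The genuine difference is in how you extract the spectral information. The paper normalizes $U+\epsilon h$ back onto the constraint sphere and expands $I$ to second order; this produces, for \emph{every} test function $h$, the inequality $\langle L_- h,h\rangle\ge 2\gamma\int F'(\phi^2)\phi^2\bigl(h-\tfrac{A}{\lambda}\phi\bigr)^2$, from which $L_-\ge 0$ is immediate and the spectral gap on $\{\phi\}^\perp$ is obtained by a direct contradiction argument using that lower bound. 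You instead take only the tangent-space second-order condition $\langle L_+ h,h\rangle\ge 0$ for $h\perp\cl_0\phi$, pin down $n(L_+)=1$, and then pull everything about $L_-$ (nonnegativity, simplicity of the kernel, hence the gap) from the pointwise comparison $L_+\le L_-$ and min--max. This is a cleaner, more structural route that avoids the explicit renormalization computation; what it gives up is the quantitative lower bound on $\langle L_- h,h\rangle$ that the paper's expansion provides, so your gap $\delta$ comes only from the abstract fact that $0$ is an isolated simple eigenvalue below $\sigma_{\mathrm{ess}}$, whereas the paper's argument is in principle more constructive. Both are correct; yours generalizes more readily to situations where the full second-order expansion is unwieldy, while the paper's yields sharper functional inequalities along the way.
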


Our next result characterizes the spectral stability of the waves $\phi$ constructed in Theorem \ref{theo:10b}.

\begin{theorem}
\label{theo:20b}
Let $0 < c < \sqrt{2}$ and let $\phi$ be the wave generated in Theorem \ref{theo:10b}. Assume that the wave $\phi$ is non-degenerate, that is, $\ker(L_+) = \spn[\phi']$. Then the wave $\phi(x - ct)$ is spectrally stable if and only if the Vakhitov-Kolokolov type condition holds:
\begin{equation}
\label{eq:vk}
4c^2 \langle L_+^{-1} \phi'', \phi'' \rangle + \|\phi'\|_{L^2}^2 < 0.
\end{equation}
\end{theorem}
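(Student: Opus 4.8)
The plan is to recast the eigenvalue problem \eqref{200b} as a Hamiltonian (i.e.\ $J\mathcal L$) spectral problem and then extract stability from the Hamiltonian--Krein instability index. Writing the system operator in \eqref{200b} as the indicated product, set
\begin{equation*}
J = \begin{pmatrix} 0 & 1 \\ -1 & 2c\partial_y\end{pmatrix}, \qquad \mathcal L = \begin{pmatrix} L_+ & 0 \\ 0 & 1\end{pmatrix},
\end{equation*}
so that \eqref{200b} reads $J\mathcal L U = \lambda U$ with $U = (v,w)^T$. One checks directly that $J^* = -J$ (using $\partial_y^* = -\partial_y$) and that $\mathcal L$ is self-adjoint; since $L_+$ has exactly one negative eigenvalue and the $(2,2)$ block is $1>0$, we have $n(\mathcal L) = n(L_+) = 1$. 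I would first record that, because $0<c<\sqrt2$ gives $1-\tfrac{c^4}{4}>0$, Theorem~\ref{theo:10b} yields $\sigma_{a.c.}(L_+) = [1-\tfrac{c^4}{4},\infty)\subset(0,\infty)$; hence the asymptotic constant-coefficient operator $L_+^\infty = \partial_y^4 + c^2\partial_y^2 + 1$ is strictly positive, and a Weyl-sequence argument (using the exponential decay of $\phi$ and thus of the potential terms) confines $\sigma_{ess}(J\mathcal L)$ to the imaginary axis. This is exactly where $c<\sqrt2$ enters: it guarantees that any unstable spectrum is isolated point spectrum of finite multiplicity.

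Next I would compute the generalized kernel of $J\mathcal L$ at $\lambda=0$, which is where the VK quantity is produced. Solving $J\mathcal L U = 0$ forces $w=0$ and $L_+ v = 0$, so by non-degeneracy $\ker(J\mathcal L) = \spn\{V_0\}$ with $V_0 = (\phi',0)^T$ (and $\mathcal L V_0 = 0$ as well). Solving $J\mathcal L V_1 = V_0$ gives $w=\phi'$ and $L_+ v = 2c\phi''$; since $\phi''\perp\phi' = \ker L_+$, this is solvable with $v = 2cL_+^{-1}\phi''$, so $V_1 = (2cL_+^{-1}\phi'',\,\phi')^T$. Attempting to extend the Jordan chain via $J\mathcal L V_2 = V_1$ leads to the Fredholm solvability condition $\langle 4c^2 (L_+^{-1}\phi'')' - \phi',\,\phi'\rangle = 0$, which after one integration by parts becomes
\begin{equation*}
\mathcal V := 4c^2\langle L_+^{-1}\phi'',\phi''\rangle + \|\phi'\|_{L^2}^2 = 0.
\end{equation*}
Thus the Jordan block at the origin has length exactly two precisely when $\mathcal V\neq 0$, and the functional obstructing its extension is the VK quantity \eqref{eq:vk}.

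With the generalized kernel in hand I would form the $1\times1$ index matrix $D = [\langle\mathcal L V_1, V_1\rangle]$. A direct computation, using $L_+(L_+^{-1}\phi'') = \phi''$ and self-adjointness, gives
\begin{equation*}
\langle\mathcal L V_1, V_1\rangle = 4c^2\langle L_+^{-1}\phi'',\phi''\rangle + \|\phi'\|_{L^2}^2 = \mathcal V,
\end{equation*}
so $n(D)=1$ when $\mathcal V<0$ and $n(D)=0$ when $\mathcal V>0$. Invoking the Hamiltonian--Krein index theorem (applicable since $n(\mathcal L)<\infty$, the generalized kernel is finite dimensional, and $\sigma_{ess}(J\mathcal L)\subset i\mathbb{R}$),
\begin{equation*}
k_r + 2k_c + 2k_i^- = n(\mathcal L) - n(D) = 1 - n(D),
\end{equation*}
where $k_r, k_c, k_i^-$ count positive real eigenvalues, complex quadruplets with positive real part, and purely imaginary eigenvalues of negative Krein signature. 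If $\mathcal V<0$ the right side is $0$, forcing $k_r=k_c=0$ and hence spectral stability; if $\mathcal V>0$ the right side is $1$, which by parity forces $k_r=1$, $k_c=k_i^-=0$, giving a single real unstable eigenvalue. This yields the claimed equivalence.

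The main technical obstacle is justifying the index theorem in the present unbounded setting: $J$ is injective with $J^{-1}(f,g) = (2cf'-g,\,f)$ unbounded, so I would either appeal to the version of the Hamiltonian--Krein count tailored to wave-type (second-order-in-time) problems, or equivalently pass to the reduced quadratic pencil $\lambda^2\psi - 2c\lambda\psi' + L_+\psi = 0$ (obtained by eliminating $w=\lambda v$) and count its unstable roots directly; the pairing $\lambda^2\|\psi\|^2 - 2c\lambda\langle\psi',\psi\rangle + \langle L_+\psi,\psi\rangle = 0$ already shows that $\text{Re}\,\lambda>0$ requires $\langle L_+\psi,\psi\rangle<0$, which meshes with the single negative direction of $L_+$. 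Secondary care is needed for the essential-spectrum confinement, verified by the limiting-operator argument above, and for the borderline case $\mathcal V=0$, which is excluded by the strict inequality: there the Jordan block has length $\geq 3$, producing algebraic-in-$t$ growth consistent with instability but falling outside the nonsingular-$D$ hypothesis, and would be treated separately.
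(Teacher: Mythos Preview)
Your proof is correct and follows essentially the same route as the paper: the same $J\mathcal L$ factorization, the same two-step Jordan chain $(\phi',0)^T \mapsto (2cL_+^{-1}\phi'',\phi')^T$, the same computation $\langle\mathcal L V_1,V_1\rangle = 4c^2\langle L_+^{-1}\phi'',\phi''\rangle + \|\phi'\|^2$, and the same application of the instability index count with $n(\mathcal L)=1$. You are in fact more explicit than the paper about the essential-spectrum confinement and the unboundedness of $J^{-1}$, which the paper absorbs into its black-box Corollary~\ref{cor:po}.
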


{\bf Remark:}
\begin{enumerate}
\item Note that $\text{span}[\phi'] \subseteq \ker(L_+)$. Thus, the non-degeneracy condition requires that the kernel of $L_+$ is minimal, which should hold generically.

\item If we only have $\text{span}[\phi'] \subseteq \ker(L_+)$, we can still claim stability when \eqref{eq:vk} holds. However, if $\ker(L_+)$ contains additional elements and $4c^2 \langle L_+^{-1} \phi'', \phi'' \rangle + \|\phi'\|_{L^2}^2 > 0$, we cannot definitively conclude instability.

\item The Vakhitov-Kolokolov condition \eqref{eq:vk} admits the usual interpretation in terms of monotonicity properties. Indeed, assuming the mapping $c \mapsto \phi_c$ is Gâteaux differentiable and differentiating with respect to $c$, we can compute $L_+^{-1}[\phi''] = -\frac{1}{2c} \partial_c \phi$, which yields
$$
4c^2 \langle L_+^{-1} \phi'', \phi'' \rangle + \|\phi'\|_{L^2}^2 = c\partial_c \|\phi'\|_{L^2}^2 + \|\phi'\|_{L^2}^2 = \partial_c(c\|\phi_c'\|_{L^2}^2).
$$
Thus, the VK condition \eqref{eq:vk} holds on all open intervals where the function $c \mapsto c\|\phi_c'\|_{L^2}^2$ is decreasing.
\end{enumerate}

Our final result determines the precise exponential decay rate of the traveling wave $\phi$.

\begin{theorem}
\label{theo:30b}
Let $0 < c < \sqrt{2}$ and let $\phi$ be the wave generated in Theorem \ref{theo:10b}. Then $\phi$ and all its derivatives satisfy the exponential bound
$$
|\phi^{(j)}(x)| \leq C_j e^{-\frac{\sqrt{2-c^2}}{2}|x|}, \quad j = 0, 1, 2, \ldots
$$
\end{theorem}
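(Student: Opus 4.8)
The plan is to exploit the fact that, away from the origin where $\phi$ vanishes, the profile equation \eqref{20b} is governed to leading order by the constant-coefficient operator $\mathcal{L}_0 := \partial_x^4 + c^2\partial_x^2 + 1$, whose Green's function decays at precisely the claimed rate. First I would record the relevant symbol analysis: $\xi^4 - c^2\xi^2 + 1$ stays bounded below by $1 - \frac{c^4}{4} > 0$ for $0 < c < \sqrt{2}$ (this is exactly the bottom of $\sigma_{a.c.}$ in Theorem \ref{theo:10b}), so $\mathcal{L}_0$ is boundedly invertible on $L^2(\mathbb{R})$. Writing its Green's function as $\mathcal{G}(x) = \frac{1}{2\pi}\int_{\mathbb{R}} \frac{e^{ix\xi}}{\xi^4 - c^2\xi^2 + 1}\,d\xi$ and shifting the contour through the complex poles $\xi_* = \pm\frac{\sqrt{2+c^2}}{2} + i\frac{\sqrt{2-c^2}}{2}$ (and their conjugates), a residue computation gives $|\mathcal{G}(x)| \leq C e^{-\frac{\sqrt{2-c^2}}{2}|x|}$, and likewise $|\partial_x^j \mathcal{G}(x)| \leq C_j e^{-\frac{\sqrt{2-c^2}}{2}|x|}$ for $j = 0,1,2,3$, since each derivative only contributes the bounded factor $i\xi_*$.

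Next, setting $g := \gamma F(\phi^2)\phi$ and noting $\phi \in H^\infty(\mathbb{R}) \subset L^2 \cap L^\infty$ by Theorem \ref{theo:10b}, I would record the representation $\phi = \mathcal{L}_0^{-1} g = \mathcal{G} * g$ together with the derivative identities $\phi^{(j)} = (\partial_x^j \mathcal{G}) * g$. The next step is an a priori, non-sharp, weighted estimate: for any $0 < a < \frac{\sqrt{2-c^2}}{2}$, conjugating by $e^{ax}$ turns $\mathcal{L}_0$ into $(\partial_x - a)^4 + c^2(\partial_x - a)^2 + 1$, whose Fourier symbol $(i\xi-a)^4 + c^2(i\xi-a)^2+1$ vanishes for some real $\xi$ only when $a = \frac{\sqrt{2-c^2}}{2}$, the real part of a characteristic root. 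Hence the conjugated operator stays invertible below this threshold, which yields $e^{a|x|}\phi \in L^2$ and, with \eqref{20b} and Sobolev embedding, a first rough bound $|\phi(x)| \leq C_a e^{-a|x|}$ for every such $a$.

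The sharp rate is then recovered by a bootstrap that uses the super-cubic vanishing of the nonlinearity. Since $F(r) = \sum_{k\geq 1} a_k r^k$, the source obeys $|g(x)| \leq C|\phi(x)|^3$, so once $\phi$ decays at some rate $a$ in the window $\bigl(\tfrac13\cdot\tfrac{\sqrt{2-c^2}}{2},\ \tfrac{\sqrt{2-c^2}}{2}\bigr)$ --- which is nonempty and accessible by the previous paragraph --- we get $|g(x)| \leq C e^{-3a|x|}$ with $3a > \frac{\sqrt{2-c^2}}{2}$. Inserting this into $\phi = \mathcal{G}*g$ and using that convolving exponentials of rates $\frac{\sqrt{2-c^2}}{2}$ and $3a$ produces decay at the slower rate $\frac{\sqrt{2-c^2}}{2}$, I obtain the sharp bound $|\phi(x)| \leq C e^{-\frac{\sqrt{2-c^2}}{2}|x|}$. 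The derivatives then follow by applying the same convolution estimate to $\phi^{(j)} = (\partial_x^j\mathcal{G})*g$ for $j \leq 3$, and for $j \geq 4$ by differentiating \eqref{20b} to express $\phi^{(j)}$ through lower-order derivatives and derivatives of $g$, each of which carries at least a factor $\phi^2$ and so decays faster. The reflection symmetry $x \mapsto -x$ of \eqref{20b} reduces the two-sided $|x|$-bound to the statement at $+\infty$.

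The main obstacle is upgrading the non-sharp rate $a < \frac{\sqrt{2-c^2}}{2}$ to the exact threshold: this is precisely where the super-linearity of $F$ is indispensable, and the convolution bookkeeping must be arranged so that the seed rate can be chosen in the window above, guaranteeing $3a$ overshoots the Green's function rate. A secondary technicality is the oscillatory factor $e^{\pm i\frac{\sqrt{2+c^2}}{2}x}$ in $\mathcal{G}$, but since only $|\mathcal{G}|$ and $|\partial_x^j\mathcal{G}|$ enter the estimates, it does not affect the exponential rate.
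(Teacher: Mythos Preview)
Your overall architecture matches the paper's: establish the decay rate $\delta=\tfrac{\sqrt{2-c^2}}{2}$ for the Green's function of $\partial_x^4+c^2\partial_x^2+1$, obtain a preliminary (non-sharp) exponential bound on $\phi$, then bootstrap through $|g|\le C|\phi|^3$ and the convolution inequality \eqref{324b} to reach the sharp rate, and finally pass to derivatives. The Green's function step differs only cosmetically (you shift the contour to the poles; the paper uses partial fractions to reduce to $\int_0^\infty\frac{\cos zx}{z^2+1}\,dz$), and your bootstrap is the paper's iteration $\delta_{j+1}=\min(\delta,3\delta_j)$ started far enough into the window $(\delta/3,\delta)$ that it terminates in one step.

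The one genuine gap is the preliminary decay. The paper obtains it by a discrete argument: it sets $a_N=\sup_{|x|\ge N}|\phi(x)|$, localises \eqref{20b} with a cutoff $\psi_N$, and uses the smallness of $F(\phi^2)$ for $|x|\gg 1$ to derive the recursion $a_N\le\epsilon\sum_{m\le N/2} q^{-m}a_{N-m}$, after which Lemma~\ref{le:40b} upgrades boundedness to exponential decay. Your conjugation route is cleaner in spirit, but the inference ``the conjugated operator stays invertible below this threshold, which yields $e^{a|x|}\phi\in L^2$'' is circular as written. Invertibility of the conjugated \emph{constant-coefficient} operator only tells you $e^{a|x|}\mathcal G\in L^1$; to conclude $e^{a|x|}\phi\in L^2$ from $\phi=\mathcal G*g$ you would need $e^{a|x|}g\in L^2$, and since $g=\gamma F(\phi^2)\phi$ that is precisely the exponential decay you are trying to establish. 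The fix is to put the nonlinearity back as a decaying potential: write $\mathcal L_-\phi=0$ with $\mathcal L_-=\mathcal L_0-\gamma F(\phi^2)$, note that $0$ is an isolated eigenvalue strictly below $\sigma_{\mathrm{ess}}(\mathcal L_-)=[1-\tfrac{c^4}{4},\infty)$, and run an Agmon energy estimate (pair against $e^{2a\rho}\chi_R^2\phi$ with a bounded Lipschitz weight $\rho$ approximating $|x|$, using that $\gamma F(\phi^2)$ is small outside a compact set). Either sketch that argument, or replace this paragraph with the paper's Lemma~\ref{le:40b} machinery; what you have now assumes the conclusion.
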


\newpage

\section{Preliminaries}
We start with some basics. 
\subsection{ Elementary inequalities}
Following the inequalities in the Introduction, for $f\in H^2(\rone)$, we have the GNS inequality:
  for any $2<p<\infty$, 
\begin{equation}
	\label{30b} 
	\|f\|_{L^p(\rone)}\leq C_p \|f\|_{\dot{H}^{s_p}(\rone)}\leq C_p \|f'\|_{L^2}^{s_p} \|f\|_{L^2}^{1-s_p}, \ \ s_p=\f{1}{2}-\f{1}{p}.
\end{equation}
It is worth noting that for $p=\infty$, the first inequality above fails, whereas  we still have the penultimate bound 
\begin{equation}
	\label{32b} 
	\|f\|_{L^\infty(\rone)}^2\leq  C \|f'\|_{L^2}  \|f\|_{L^2}. 
\end{equation}
Similarly, there is control in terms of the second derivative, namely 
\begin{equation}\label{33b}
\|f\|_{L^p(\rone)}\leq C_p \|f\|_{\dot{H}^{s_p}(\rone)}\leq C_p \|f''\|_{L^2}^{\f{s_p}{2}} \|f\|_{L^2}^{1-\f{s_p}{2}},
\end{equation}
more precisely,
\begin{equation}
	\label{34b} 
	\|f\|_{L^p(\rone)}^p\leq C_p \|f''\|_{L^2}^{\(\f{p}{4}-\f{1}{2}\right)} \|f\|_{L^2}^{\left(\f{3p}{4}+\f{1}{2}\right)}.
\end{equation}

An elementary inequality, which will be useful in the sequel is as follows: for any $\de_1\neq \de_2, \de_1>0, \de_2>0$, 
\begin{equation}
	\label{324b} 
	\int_{-\infty}^\infty e^{-\de_1|x-y|} e^{-\de_2|y|} dy \leq C e^{-\min(\de_1, \de_2) |x|}. 
\end{equation}
Note that even for $\de=\de_1=\de_2$, one obtains a slightly worse bound $C_\eps e^{-(\de-\eps) |x|}$ for any $\eps>0$. 
\subsection{Concentration compactness.}
We have the following standard concentration compactness result, as found in  \cite{Lions}. 
\begin{theorem}
	\label{ConcCompb}Let $(\rho_{n})$ be a sequence in $L^{1}(\mathbb{R})$
	satisfying:
	$$
	\rho_{n}\geq0\ \text{ and }\int_{\mathbb{R}}\rho_{n}\ dx=\lambda,
	$$
	where $\lambda>0$ is fixed. Then there exist a subsequence $(\rho_{n_{k}})$
	satisfying one of the three following possibilities:
	\begin{enumerate}[(i)]
	\item (compactness) there exist $y_{k}\in\mathbb{R}$ such that $\rho_{n_{k}}(\cdot+y_{k})$
	is tight i.e.:
	$$
	\forall\epsilon>0,\exists R<\infty,\ \int_{y_{k}+B_{R}}\rho_{n_{k}}\ dx\ge\lambda-\epsilon;
	$$
	
	\item (vanishing) $\lim_{k\to\infty}\sup_{y\in\mathbb{R}}\int_{y+B_{R}}\rho_{n_{k}}\ dx=0,$
	for all $R<\infty;$
	
	\item (splitting) there exist $\alpha\in(0,\lambda)$ such that for
	all $\epsilon>0$, there exist $k_{0}\geq1$ and $\rho_{k}^{1}, 
	\rho_{k}^{2}\in L^{1}(\mathbb{R})$ satisfying for $k\geq k_{0}:$
	$$
	\left\|\rho_{n_{k}}-(\rho_{k}^{1}+\rho_{k}^{2})\right\|_{1}<\epsilon,\qquad\left|\int_{\mathbb{R}}\rho_{k}^{1}\ dx-\alpha\right|<\epsilon,\qquad\left|\int_{\mathbb{R}}\rho_{k}^{2}\ dx-(\lambda-\alpha)\right|<\epsilon,
	$$
	and $\textup{dist}(\supp(\rho_{k}^{1}), \supp(\rho_{k}^{2}))\to\infty.$
    \end{enumerate}
\end{theorem}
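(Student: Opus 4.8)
The plan is to follow Lions's classical argument organized around the Lévy concentration function
$$Q_n(R) = \sup_{y \in \mathbb{R}} \int_{B_R(y)} \rho_n(x)\,dx, \qquad R > 0,$$
where $B_R(y)$ denotes the interval $(y-R, y+R)$. First I would record the elementary properties: each $Q_n$ is nondecreasing in $R$, satisfies $0 \le Q_n(R) \le \lambda$, and obeys $\lim_{R\to\infty} Q_n(R) = \lambda$ because $\int_{\mathbb{R}}\rho_n\,dx = \lambda$. Since $\{Q_n\}$ is a family of uniformly bounded monotone functions, Helly's selection theorem produces a subsequence $(Q_{n_k})$ converging pointwise to a nondecreasing limit $Q$ with $0 \le Q \le \lambda$. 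I then set
$$\alpha := \lim_{R\to\infty} Q(R) = \sup_{R>0} Q(R) \in [0,\lambda],$$
and the trichotomy is dictated by whether $\alpha = 0$, $0 < \alpha < \lambda$, or $\alpha = \lambda$. The point to keep in mind throughout is that although each $Q_{n_k}$ tends to $\lambda$ at infinity, the pointwise limit $Q$ may have $\alpha < \lambda$, since mass can escape to spatial infinity in the limit; this is precisely what the vanishing and splitting alternatives encode.

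The two extremal cases are relatively quick. If $\alpha = 0$, then for every fixed $R$ we have $\sup_y \int_{B_R(y)}\rho_{n_k}\,dx = Q_{n_k}(R) \to Q(R) \le 0$, which is exactly the vanishing alternative (ii). If $\alpha = \lambda$, I would produce the tightness centers as follows: choose $R_0$ with $Q(R_0) > \lambda - \epsilon_0$ for a fixed $\epsilon_0 < \lambda/2$, so that for $k$ large there exist centers $y_k$ with $\int_{B_{R_0}(y_k)}\rho_{n_k}\,dx > \lambda/2$. Given an arbitrary $\epsilon \in (0,\lambda/2)$, pick $R_\epsilon \ge R_0$ with $Q(R_\epsilon) > \lambda - \epsilon$; the corresponding near-optimal ball $B_{R_\epsilon}(\tilde y_k)$ then carries mass exceeding $\lambda/2$ as well, and since two subintervals each containing more than half of the total mass $\lambda$ cannot be disjoint, they overlap, forcing $|\tilde y_k - y_k| \le R_0 + R_\epsilon$. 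Hence $B_{R_0+2R_\epsilon}(y_k)$ contains $B_{R_\epsilon}(\tilde y_k)$ and thus captures mass $> \lambda - \epsilon$, establishing tightness of $\rho_{n_k}(\cdot + y_k)$ as in alternative (i).

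The hard part is the splitting case $0 < \alpha < \lambda$, where I must exhibit two diverging pieces. Fix $\epsilon > 0$ and select $R_0$ with $Q(R_0) > \alpha - \epsilon$; for $k$ large there exist centers $y_k$ with $\int_{B_{R_0}(y_k)}\rho_{n_k}\,dx > \alpha - \epsilon$, while the bound $Q_{n_k}(R_0) \to Q(R_0) \le \alpha$ simultaneously controls this inner mass from above. The delicate step is to choose a second, \emph{diverging} radius $\rho_k \to \infty$ so that the annular mass separates the two pieces. I would obtain it by a diagonal argument: since $Q_{n_k}(R) \to Q(R)$ for each fixed $R$ and $Q(R) \uparrow \alpha$, one can pick integers $R_0 < \rho_k \to \infty$ slowly enough that $\limsup_k Q_{n_k}(\rho_k) \le \alpha$. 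Setting
$$\rho_k^1 := \rho_{n_k}\,\mathbf{1}_{B_{R_0}(y_k)}, \qquad \rho_k^2 := \rho_{n_k}\,\mathbf{1}_{\mathbb{R}\setminus B_{\rho_k}(y_k)},$$
I would verify directly that $\|\rho_{n_k} - (\rho_k^1 + \rho_k^2)\|_{1}$ equals the annular mass, bounded by $Q_{n_k}(\rho_k) - (\alpha - \epsilon)$ with $\limsup_k \le \epsilon$; that $\int \rho_k^1\,dx$ and $\int \rho_k^2\,dx$ lie within $\epsilon$ of $\alpha$ and $\lambda - \alpha$ respectively; and that $\operatorname{dist}(\supp \rho_k^1, \supp \rho_k^2) \ge \rho_k - R_0 \to \infty$. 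After renaming the error parameter this yields alternative (iii) with the value $\alpha$ fixed above.

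I expect the main obstacle to be the simultaneous management of the two radii in the splitting case: the inner radius must stay fixed, pinning the inner mass near $\alpha$, while the outer radius must diverge to force support separation. The only place where the pointwise-only nature of the convergence $Q_{n_k} \to Q$ must be handled with genuine care is the diagonal extraction guaranteeing $\limsup_k Q_{n_k}(\rho_k) \le \alpha$ along a diverging sequence $\rho_k$, rather than merely for each fixed radius. Everything else reduces to monotonicity of $Q_n$ and routine bookkeeping of masses.
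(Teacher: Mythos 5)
Your proposal is correct; note that the paper itself gives no proof of this theorem --- it is quoted as a standard result with a citation to Lions --- and your reconstruction via the L\'evy concentration function $Q_n(R)=\sup_{y\in\mathbb{R}}\int_{B_R(y)}\rho_n\,dx$, Helly selection, and the trichotomy on $\alpha=\lim_{R\to\infty}Q(R)$ is exactly the classical argument of that reference, with the two genuinely delicate steps (the diagonal extraction of slowly diverging radii $\rho_k$ with $\limsup_k Q_{n_k}(\rho_k)\le\alpha$ in the splitting case, and the overlap argument pinning $|\tilde y_k-y_k|\le R_0+R_\epsilon$ in the compactness case) handled correctly. The only cosmetic loose end is that your tightness bound is obtained for $k$ large while alternative (i) asserts it for every $k$; this is repaired by enlarging $R$ to cover the finitely many remaining indices, since each individual $\rho_{n_k}\in L^1(\mathbb{R})$ is tight on its own.
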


\subsection{Instability index count}
Here, we present a short introduction to the theory of instability indices, which helps us decide the spectral stability of the waves. For a self-adjoint operator $S$, the Morse index $n(S)$  is taken to be  the dimension of the negative subspace of $S$. For an eigenvalue problem of the form 
\begin{equation}
	\label{e:10} 
	\cj\cl u=\la u,
\end{equation}
with $\cj^*=-\cj, \cl^*=\cl$, we introduce the generalized kernel 
$$
\gKer(\cj\cl)=\spn\{u: (\cj\cl)^l u=0, l=1, 2, \ldots \}. 
$$
Assuming $\dim(\gKer(\cj\cl))<\infty$, we introduce a basis $\{\eta_j\}_{j=1}^N$, and a symmetric matrix $\cd$, with entries 
$$
\cd_{i j}=\cd_{ji}=\dpr{\cl \eta_i}{\eta_j}.
$$
There are  three different type of point spectra arising in the eigenvalue problem \eqref{e:10}, namely unstable real spectrum, denoted $k_r:=\{\la>0: \la\in \si_{p.p.}(\cj\cl)\}$, the unstable complex spectra, $k_c=\{\la: \la\in \si_{p.p.}(\cj\cl), \text{Re }\la>0, \text{Im }\la>0\}$ and finally, the marginally stable spectrum with negative Krein signature, defined by $k_{i}^-=\{i \mu, \mu>0: \cj\cl f=i \mu f, \dpr{\cl f}{f}<0 \}$. The instability index formula then reads 
\begin{equation}
	\label{e:20} 
	k_r+2 k_c+2k_i^-=n(\cl)-n(\cd).
\end{equation}
Note that the number of unstable modes, counted with multiplicities,  is exactly $k_r+2k_c$, whereas right-hand side of \eqref{e:20} is at least as much. In the particular case $n(\cl)=1$, which will be of the main interest herein, the formula \eqref{e:20} yields a rather precise information about the stability for \eqref{e:10}, namely that the stability occurs exactly when $n(\cd)=1$. We formulate this as a corollary. 
\begin{cor}
    \label{cor:po} If $n(\cl)=1$, then the eigenvalue problem \eqref{e:10} has no non-trivial solutions (that is it is spectrally stable) if and only if $n(\cd)=1$. 
\end{cor}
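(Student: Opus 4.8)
The plan is to read off the result directly from the instability index identity \eqref{e:20}, exploiting that the three index counts on its left-hand side are non-negative integers of fixed parity. First I would recall that, by definition, spectral stability of \eqref{e:10} is precisely the statement that there are no eigenvalues with positive real part, i.e. that $k_r = k_c = 0$; the purely imaginary modes counted by $k_i^-$ are marginally stable and do not obstruct stability.

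With $n(\cl) = 1$, the identity \eqref{e:20} becomes
$$
k_r + 2 k_c + 2 k_i^- = 1 - n(\cd).
$$
Since the left-hand side is a non-negative integer, this already forces $n(\cd) \in \{0, 1\}$. For the forward implication, I would assume stability, so that $k_r = k_c = 0$ and the identity reduces to $2 k_i^- = 1 - n(\cd)$; as the left side is even and non-negative while $1 - n(\cd) \leq 1$, the only admissible value is $n(\cd) = 1$ (and incidentally $k_i^- = 0$). For the reverse implication, setting $n(\cd) = 1$ makes the right-hand side vanish, and non-negativity of each summand forces $k_r = k_c = k_i^- = 0$; in particular $k_r = k_c = 0$, which is exactly spectral stability.

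There is essentially no analytic obstacle here: the corollary is a purely arithmetic consequence of \eqref{e:20}, and the only point requiring care is the parity observation that $2 k_c + 2 k_i^-$ is even, which is what distinguishes the stable case $n(\cd) = 1$ from the unstable case $n(\cd) = 0$ (in which one necessarily has $k_r = 1$, a genuine positive real eigenvalue). The content of the statement therefore lies entirely in the index formula \eqref{e:20} itself, whose validity I would take as given.
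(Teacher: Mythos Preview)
Your proof is correct and is exactly the argument the paper has in mind: the corollary is stated immediately after the observation that, when $n(\cl)=1$, formula \eqref{e:20} forces stability to be equivalent to $n(\cd)=1$, and the paper offers no further detail. Your parity-and-non-negativity reading of $k_r + 2k_c + 2k_i^- = 1 - n(\cd)$ is precisely the intended (and only) content.
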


\section{Variational construction: Proof of Theorem \ref{theo:10b}}

We construct our traveling wave solution $\phi$ as the extremizer of a constrained maximization problem.

For $\phi \in H^2(\rone)$, we look to maximize the
quantity, 
$$
I[\phi]:=\int_{\rone}G(|\phi(x)|^2)\ dx,
$$where $G'(r)=F(r)$ or equivalently,
$$
G(r)=\sum_{k=1}^N\frac{a_k}{k+1}r^{k+1},
$$
subject to the constraint, 
$$
L[\phi]:=\int_{\rone}(\phi'')^{2}-c^{2}(\phi')^{2}+\phi^{2}\ dx=\lambda,
$$for any given positive $\la$.

We now move on to establish that this constrained optimization problem is well defined. We start with a few simple lemmas. 
 \subsection{Some preparatory lemmas}
 \begin{lemma}
 	\label{le:10b}
 Let $c\in (0, \sqrt{2})$. Then,  constraint functional $L$ satisfies the coercivity estimate 
 	\begin{equation}
 		\label{40b} 
 		L[f]\geq \de_c \|f\|_{H^2}^2, 
 	\end{equation}
 for some $\de_c>0$. 
 \end{lemma}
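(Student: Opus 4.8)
The natural strategy is to pass to the Fourier side via Plancherel's theorem, where $L$ acts as a Fourier multiplier. Writing $\wh{f}$ for the Fourier transform of $f$, I would use
$$
L[f] = \int_{\rone} \left( \xi^4 - c^2 \xi^2 + 1 \right) |\wh{f}(\xi)|^2 \, d\xi,
$$
together with the equivalence $\|f\|_{H^2}^2 \sim \int_{\rone} (1 + \xi^2)^2 |\wh{f}(\xi)|^2 \, d\xi$. In this form \eqref{40b} reduces entirely to the pointwise symbol estimate
$$
m(\xi) := \xi^4 - c^2 \xi^2 + 1 \geq \delta_c \, (1 + \xi^2)^2, \qquad \xi \in \rone,
$$
for some $\delta_c > 0$, so the whole lemma becomes a one-variable calculus problem.

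The first step is to check uniform positivity of the numerator. Substituting $t = \xi^2 \geq 0$ and setting $p(t) = t^2 - c^2 t + 1$, the minimum of $p$ over $t \geq 0$ is attained at $t = c^2/2$ and equals $1 - \tfrac{c^4}{4}$, which is strictly positive precisely because $c < \sqrt{2}$. (This is the same threshold that governs the bottom of the essential spectrum $1 - \tfrac{c^4}{4}$ recorded in Theorem \ref{theo:10b}.) Hence $p(t) \geq 1 - \tfrac{c^4}{4} > 0$ for all $t \geq 0$.

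It then remains to dominate $(1+t)^2$ by a multiple of $p(t)$. Since the ratio $g(t) = p(t)/(1+t)^2$ is continuous on $[0, \infty)$, strictly positive everywhere (both numerator and denominator are positive), and satisfies $g(t) \to 1$ as $t \to \infty$, it attains a strictly positive infimum $\delta_c := \inf_{t \geq 0} g(t) > 0$. Feeding this pointwise bound back through Plancherel yields \eqref{40b}.

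There is no genuine analytic obstruction here; the entire content lies in the elementary observation that the constraint $c < \sqrt{2}$ is exactly what makes the quartic symbol $m(\xi)$ bounded below by a positive multiple of $(1+\xi^2)^2$. The only point deserving minor care is how $\delta_c$ degenerates: since the minimum value $1 - \tfrac{c^4}{4} \to 0$ as $c \uparrow \sqrt{2}$, the coercivity constant $\delta_c$ must vanish in that limit, consistent with the loss of the spectral gap at the endpoint. If one prefers an explicit constant and wishes to bypass the infimum argument, an alternative purely algebraic route is to bound the cross term by Young's inequality, $c^2 \xi^2 \leq \tfrac{a}{2}\xi^4 + \tfrac{c^4}{2a}$ with a parameter $a \in (0,1)$ taken close to $1$, and then absorb the resulting constants; this produces a workable, though non-sharp, value of $\delta_c$.
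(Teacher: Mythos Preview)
Your proposal is correct and follows essentially the same approach as the paper: both pass to the Fourier side via Plancherel and reduce \eqref{40b} to the pointwise symbol bound $\xi^4 - c^2\xi^2 + 1 \geq \de_c\,(1+\xi^2)^2$, using that the minimum of $t^2 - c^2 t + 1$ over $t\geq 0$ is $1 - c^4/4 > 0$ when $c<\sqrt{2}$. The only cosmetic difference is that the paper completes the square to obtain the explicit lower bound $\xi^4 - c^2\xi^2 + 1 \geq (1 - \tfrac{c^4}{4})\max(1,\xi^4)$ and takes $\de_c = (1 - \tfrac{c^4}{4})/10$, whereas you extract $\de_c$ non-constructively as the positive infimum of the continuous ratio $p(t)/(1+t)^2$.
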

 \begin{proof}
 	By Plancherel's,
 	$$
 	L[f]=\int(\xi^{4}-c^{2}\xi^{2}+1)|\hat{f}(\xi)|^{2}d\xi. 
 	$$
 	Then, it is clear by completing the square that 
 	$$
 	\xi^{4}-c^{2}\xi^{2}+1 \geq (1-\f{c^4}{4}) \max(1, \xi^4),
 	$$
 	whence 
 	$$
 	L[f]\geq (1-\f{c^4}{4}) \int \max(1, \xi^4) |\hat{f}(\xi)|^{2}d\xi \geq \de_c \|f\|_{H^2}^2, 
 	$$
 	where one could take $\de_c:=\f{(1-\f{c^4}{4})}{10}>0$. 
 \end{proof}
 
Observe by the Sobolev embedding,  \eqref{34b}, one has for $p>2$
$$
\|\phi\|_{L_p}^p\leq C_p \|\phi''\|_{L^2}^{\(\f{p}{4}-\f{1}{2}\right)} \|\phi\|_{L^2}^{\(\f{3p}{4}+\f{1}{2}\right)}\leq C_p \|\phi\|_{H^2}^p.
$$

This combined with the  definition of the functional $I[\cdot]$, and Lemma \ref{le:10b}, gives, 
$$
I[\phi]=\sum_{k=1}^N\f{a_k}{k+1}\|\phi\|_{2k+2}^{2k+2}\leq \sum_{k=1}^N \f{a_k}{k+1}\|\phi\|_{H^2}^{2k+2}\leq C \sum_{k=1}^N \f{a_k}{k+1}\(L[\phi]\right)^{k+1} =CG(\la) <\infty
$$

Thus, for a function $\phi$ satisfying the constraint $L[\phi]=\la$, we have $I[\phi	]\leq C_\lambda<\infty$. In fact, 
introduce 
$$
0< M_{\lambda}:=\sup_{\phi\in H^{2}(\rone),\ L[\phi]=\lambda}I[\phi]\leq C_\lambda.
$$
 With this, the variational problem 
\begin{equation}
	\label{60b} 
	\left\{ \begin{array}{c}
		I[u]\to \max \\
		L[u]=\la.
	\end{array}
	\right.
\end{equation}
is well-posed for each $\la>0$. 

 We now prove the superlinearity of $M_\lambda$. 
 \begin{lemma}[Superlinearity of $M_{\la}$]
 	\label{le:5b} 
 	For $\la >0$ and $\al\in(0,\lambda)$,
 	$$
 	M_\la > M_\al +M_{\la-\al}.
 	$$
 \end{lemma}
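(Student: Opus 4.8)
The plan is to exploit the homogeneity structure of the problem to first establish a scaling relation between $M_\lambda$ and $M_{t\lambda}$, and then to deduce strict superadditivity from a strict convexity/superhomogeneity property. First I would understand how $I$ and $L$ behave under the dilation $\phi \mapsto \phi_\theta(x) := \theta\,\phi(x)$ (or under a combined scaling in amplitude; note that $L$ is quadratic so $L[\theta\phi] = \theta^2 L[\phi]$). Since $L$ is exactly quadratic, rescaling the amplitude by $\theta = \sqrt{t}$ sends the constraint surface $\{L[\phi]=\lambda\}$ to $\{L[\phi]=t\lambda\}$ bijectively. Tracking what this does to $I[\phi]=\sum_k \frac{a_k}{k+1}\|\phi\|_{2k+2}^{2k+2}$, each homogeneous piece of degree $2k+2 \geq 4$ scales like $\theta^{2k+2} = t^{k+1}$, so $I[\sqrt{t}\,\phi] = \sum_k \frac{a_k}{k+1} t^{k+1}\|\phi\|_{2k+2}^{2k+2}$. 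Taking the supremum over the constraint surface, this should yield the relation
$$
M_{t\lambda} = \sup_{L[\phi]=\lambda} \sum_{k=1}^N \frac{a_k}{k+1}\, t^{k+1}\,\|\phi\|_{2k+2}^{2k+2}.
$$
The key observation is that since every exponent $k+1 \geq 2$ strictly exceeds $1$, each term is a \emph{strictly superlinear} (strictly convex, vanishing at $t=0$) function of $t$, which forces strict superadditivity $M_{t_1\lambda + t_2\lambda} > M_{t_1\lambda} + M_{t_2\lambda}$ after the appropriate normalization.

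The cleanest route to the strict inequality is as follows. Writing $\lambda = \alpha + (\lambda-\alpha)$, set $s = \alpha/\lambda \in (0,1)$ so that $\alpha = s\lambda$ and $\lambda - \alpha = (1-s)\lambda$. I would show that $M_{s\lambda} = f(s)$ where $f$ is built from the above scaling and is \emph{strictly superlinear}: $f(s) > s\,f(1)$ and $f(1-s) > (1-s)f(1)$ whenever $0<s<1$, with at least one inequality strict because the lowest-degree term has exponent $\geq 2$. More directly, I would take a near-maximizer $\psi$ for the problem at level $\lambda$ (so $L[\psi]=\lambda$ and $I[\psi] > M_\lambda - \varepsilon$), and rescale it to $\psi_1 = \sqrt{s}\,\psi$ and $\psi_2 = \sqrt{1-s}\,\psi$, which satisfy $L[\psi_1]=\alpha$, $L[\psi_2]=\lambda-\alpha$. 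Then
$$
M_\alpha + M_{\lambda-\alpha} \geq I[\psi_1] + I[\psi_2] = \sum_{k=1}^N \frac{a_k}{k+1}\bigl(s^{k+1} + (1-s)^{k+1}\bigr)\|\psi\|_{2k+2}^{2k+2}.
$$
Since $s^{k+1} + (1-s)^{k+1} < s + (1-s) = 1$ for every $k \geq 1$ and $s\in(0,1)$ (strict convexity of $t\mapsto t^{k+1}$ together with the endpoint values), this gives an inequality in the \emph{wrong} direction — it bounds $M_\alpha + M_{\lambda-\alpha}$ from below by something smaller than $I[\psi]$, which is not immediately conclusive. I therefore expect the main obstacle to be obtaining the inequality in the correct direction: the superadditivity $M_\lambda > M_\alpha + M_{\lambda-\alpha}$ requires comparing the optimum at level $\lambda$ against the \emph{sum} of two separate optima, not against a single rescaled function.

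To overcome this, I would argue in the opposite direction by \emph{rescaling maximizers of the smaller problems up} to level $\lambda$. Take near-maximizers $\psi_\alpha, \psi_{\lambda-\alpha}$ for $M_\alpha$ and $M_{\lambda-\alpha}$; I may assume by the earlier coercivity bound (Lemma \ref{le:10b}) that these live in a bounded ball of $H^2$. The cleanest device is to establish the scaling identity $M_{t\lambda} \geq t^{\theta} M_\lambda$ for $t \geq 1$ with some power $\theta > 1$ governed by the lowest degree appearing in $F$, and the reverse-type bound $M_{t\lambda} \leq t^{\theta'} M_\lambda$ for $t \leq 1$; concretely, since the leading term of $I$ under scaling behaves like $t^{k+1}$ with $k\geq 1$, one gets $M_{t\lambda} \geq t^2 M_\lambda$ for $t\geq 1$ (keeping only the strict superlinearity of the lowest power). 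Applying this with the normalization $\theta_1 = \alpha/\lambda$, $\theta_2 = (\lambda-\alpha)/\lambda$, $\theta_1 + \theta_2 = 1$, and using $M_{\theta_i \lambda} \leq \theta_i^{?} M_\lambda$ with an exponent strictly exceeding $1$, yields
$$
M_\alpha + M_{\lambda-\alpha} = M_{\theta_1\lambda} + M_{\theta_2\lambda} \leq (\theta_1^{\beta} + \theta_2^{\beta})\, M_\lambda < (\theta_1 + \theta_2)\, M_\lambda = M_\lambda,
$$
where $\beta > 1$ comes from the homogeneity degree $\geq 2$ of the lowest nonlinear term and the strict inequality $\theta_1^\beta + \theta_2^\beta < \theta_1 + \theta_2$ holds for $\theta_1,\theta_2 \in (0,1)$. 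Making this last chain rigorous — in particular justifying the sub-homogeneity bound $M_{t\lambda} \leq t^\beta M_\lambda$ for $t \in (0,1)$ uniformly across all the terms of the polynomial $F$ (the presence of several distinct powers $k$ requires that the \emph{smallest} effective exponent dominate, using $M_\lambda > 0$) — is the technical heart of the argument and the step I expect to require the most care.
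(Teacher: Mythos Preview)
Your approach is correct and essentially the same as the paper's: both exploit that $L$ is quadratic while every term of $I$ is homogeneous of degree $\geq 4$, yielding the scaling bound $M_{t\lambda}\leq t^{2}M_\lambda$ for $t\in(0,1)$ (recorded in the paper as \eqref{12b}), from which $M_\alpha+M_{\lambda-\alpha}\leq(\theta_1^{2}+\theta_2^{2})M_\lambda<M_\lambda$ follows. The step you flag as the ``technical heart'' requiring the most care is actually a one-liner: for any $\psi$ with $L[\psi]=t\lambda$, the rescaling $\tilde\psi=t^{-1/2}\psi$ satisfies $L[\tilde\psi]=\lambda$ and $I[\psi]=\sum_k\frac{a_k}{k+1}t^{k+1}\|\tilde\psi\|_{2k+2}^{2k+2}\leq t^{2}I[\tilde\psi]\leq t^{2}M_\lambda$, since $t<1$ forces $t^{k+1}\leq t^{2}$ for every $k\geq 1$; the presence of several powers causes no difficulty because the smallest exponent automatically dominates when $t<1$.
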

 
 \begin{proof}
 As $G(\cdot)$ is a polynomial with non negative coefficients, for $\beta >1$ and $\phi \in H^2$,  we obtain the simple strict inequality, 
$$
\beta^2 I[\phi] = \int	\beta^2 G[|\phi|^2]\ dx < \int	G[|\beta \phi|^2] \ dx = I[\beta\phi].
$$

So, for $\la>0$ and $\al\in (0,\la)$, suppose $\{\phi_n\}$ is a maximizing sequence for $I$ with constraint $L[\phi_n]=\al$, then by the above inequality, 
$$
\f{\la}{\al}M_\alpha = \lim_{n\to \infty}\f{\la}{\al}I[\phi_n]<\lim_{n\to\infty}I\left[\sqrt{\f{\la}{\al}}\phi_n\right] \leq M_\lambda.
$$
Thus, without loss of generality, assuming $\al\in [\la/2,\la)$, we have, 
$$
M_\lambda >\f{\la}{\al}M_\al=\f{\al +(\la-\al)}{\al}M_\al = M_\al + \f{\la -\al}{\al}M_\al > M_\alpha +M_{\la-\al}.
$$

In fact, by nature of the polynomial $G(x)$ and the functional $I[\cdot]$, one may run the same arguments as above to conclude, for $\beta >1$,
$$
\beta^4I[\phi]<I[\beta \phi],
$$ and whence, for $0<\alpha<\la,$
\begin{equation}\label{12b}
\f{M_\alpha}{\al^2}<\f{M_\la}{\la^2}.
\end{equation}
 \end{proof}
 
 \subsection{The variational problem \eqref{60b} has a solution}
 
 We have the following proposition. 
 \begin{proposition}
 	\label{prop:10b} 
 	Let $\la>0, c\in (0, \sqrt{2})$. Then, the variational problem \eqref{60b} has a solution $U\in H^2(\rone)$.  

 \end{proposition}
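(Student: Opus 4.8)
The plan is to use the concentration-compactness principle (Theorem \ref{ConcCompb}) applied to a maximizing sequence for the variational problem \eqref{60b}. First I would fix $\la>0$ and take a maximizing sequence $\{\phi_n\}\subset H^2(\rone)$ with $L[\phi_n]=\la$ and $I[\phi_n]\to M_\la$. By the coercivity estimate \eqref{40b} in Lemma \ref{le:10b}, the constraint $L[\phi_n]=\la$ forces $\|\phi_n\|_{H^2}^2\leq \de_c^{-1}\la$, so the sequence is bounded in $H^2(\rone)$. The natural choice of density to feed into Theorem \ref{ConcCompb} is $\rho_n := (\phi_n'')^2 + (\phi_n')^2 + \phi_n^2$, or a comparable quantity built from the $H^2$-norm density; its total mass is comparable to a fixed constant, and after rescaling we may normalize $\int_{\rone}\rho_n\,dx$ to a fixed $\la'>0$ so that the trichotomy applies.

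The core of the argument is to rule out vanishing and splitting, leaving compactness as the only surviving alternative. To exclude \emph{vanishing}, I would argue that if $\rho_n$ vanishes, then a standard lemma shows $\phi_n\to 0$ in every $L^p(\rone)$ for $2<p<\infty$, which drives $I[\phi_n]\to 0$ and contradicts $M_\la>0$ (positivity of $M_\la$ was already recorded in the excerpt). To exclude \emph{splitting}, I would use the strict superadditivity inequality $M_\la > M_\al + M_{\la-\al}$ from Lemma \ref{le:5b}: if the sequence splits into two pieces with masses converging to $\al$ and $\la-\al$ whose supports separate, then the near-orthogonality of the pieces (their cross terms in both $L$ and $I$ vanish as the supports separate) would yield $M_\la \leq M_\al + M_{\la-\al}$, contradicting strict superadditivity. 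Once both alternatives are eliminated, compactness gives translates $\phi_{n_k}(\cdot + y_k)$ that are tight; combined with the $H^2$ bound, this upgrades weak convergence to strong convergence in $L^p$ for the relevant exponents, so the limit $U$ satisfies $I[U]=M_\la$ and (after checking the constraint is preserved) $L[U]=\la$.

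The main obstacle, and the step deserving the most care, is handling the constraint functional $L$ within the splitting analysis and in the passage to the limit. Unlike a pure norm, $L[f]=\int (f'')^2 - c^2(f')^2 + f^2\,dx$ has an indefinite-looking middle term, so I cannot directly treat $L$ as weakly lower semicontinuous; instead I would exploit the coercivity \eqref{40b}, which shows $L$ is equivalent to $\|\cdot\|_{H^2}^2$ and hence that $L^{1/2}$ behaves like a genuine norm with its attendant weak lower semicontinuity. The delicate point is ensuring that in the compactness case the weak $H^2$ limit $U$ has $L[U]=\la$ exactly rather than $L[U]<\la$: if the constraint were lost in the limit, one would rescale $U$ to $\sqrt{\la/L[U]}\,U$, and the strict monotonicity \eqref{12b} of $\al\mapsto M_\al/\al^2$ from Lemma \ref{le:5b} would then contradict maximality unless $L[U]=\la$. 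This rescaling-plus-monotonicity device is what closes the gap and confirms $U$ is a genuine extremizer.
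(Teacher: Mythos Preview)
Your proposal is correct and takes essentially the same approach as the paper: a maximizing sequence bounded in $H^2$ by Lemma \ref{le:10b}, concentration compactness, ruling out vanishing via $\|\phi_n\|_{L^p}\to 0$ and splitting via the superadditivity of Lemma \ref{le:5b}, and finally recovering $L[U]=\la$ through exactly the rescaling-plus-monotonicity device you describe. The only minor technical variation is that the paper applies Theorem \ref{ConcCompb} to (a normalization of) $u_k^2$ rather than the full $H^2$-density you propose, but this makes no essential difference to the argument.
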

\begin{proof}
	We take a maximizing sequence, say $u_k$, so that $L[u_k]=\la$ and $\lim_k I[u_k]=M_\la>0$.  Due to Lemma \ref{le:10b}, this is a bounded sequence in $H^2(\rone)$, that is $\sup_k \|u_k\|_{H^2}= K<\infty$. After passing to a subsequence (maybe several subsequences), we may assume, without loss of generality, that there exist $L_1, L_2, L_3$, all strictly positive, so that 
$$
	\lim_k \|u_{k}\|_{2}^{2}=L_{1}>0,\quad \lim_k \|u'_{k}\|_{2}^{2}= L_{2}>0,\quad \lim_k\|u''_{k}\|_{2}^{2}=L_{3}>0.
$$
	Indeed, if say $L_1=0$, then from \eqref{34b}, 
	\begin{equation}
	\lim_k\|u_k\|_{L^p(\rone)}^p\leq C_p \lim_k \|u_k''\|_{L^2}^{\(\f{p}{4}-\f{1}{2}\right)} \|u_k\|_{L^2}^{\(\f{3p}{4}+\f{1}{2}\right)}=0,
	\end{equation}
	for all $p\geq 2$. Hence, $M_\la=\lim_k I[u_k]=0$, a contradiction. Similar contradiction is reached, if any of $L_2$ or $L_3$ are assumed to be zero. 
	
	Next, there is a sequence $c_k: \lim_k c_k=1$, so that $c_k \|u_k\|^2=L_1$, so that we introduce a new sequence $\rho_k:=c_k u_k^2$, so that $ \int \rho_k(x) dx=L_1$. We apply the concentration compactness theorem, Theorem \ref{ConcCompb}, to the sequence of functions $\{\rho_k\}$ . 
	
	We show that the only possibility for $\rho_k$ is the compactness/tightness. To this end, we need to refute the other two possibilities. 
	
	\subsubsection{Vanishing cannot occur}
	Assume, for a contradiction, that $\{\rho_k\}$ vanishes. Then for every $\epsilon>0$
	there exist $N=N_\eps>0$, such that for all $k>N$, 
	\begin{equation}
		\label{90b}
		\sup_{y\in \rone} \int_{y-2}^{y+2} u_{k}^2(x)\ dx<\epsilon. 
	\end{equation}
	Introduce $\eta\in C^{\infty}$ be such that $0\leq\eta(x)\leq1$ and 
	$
	\eta(x)=\begin{cases}
		1 & |x|\leq1\\
		0 & |x|>2
	\end{cases}.
	$
	
	An easy observation is that  $u_k\eta(\f{\cdot -y}{R}) \in H^2$ with, 
	$$
			\left\|u_k\eta\left(\f{x-y}{R}\right)\right\|_{H^2}\leq C_{\eta,R}\|u_k\|_{H^2}\leq C_{\eta,R}K;
	$$
	or in other words is bounded. 
	
	Applying inequality \eqref{30b}, we obtain for all $p\geq 4,$
	
	\begin{align*}
	\int_{\rone}u_{k}^{p} dx  \leq &  \sum_{j}\int_{j-1}^{j+1} u_{k}^{p}\ dx\leq\sum_{j}\int \left(u_{k}(x)\eta(x-j)\right)^{p}\ dx \\
		  \leq & \  C \sum_{j}  \|u_{k}\eta(\cdot-j)\|_{\dot{H}^1}^{\f{p}{2}-1} \|u_{k}\eta(\cdot-j)\|_{L^2}^{\f{p}{2}+1} \\  
		  \leq & \  C \|u_k\|_{H^2}^{\f{p}{2}-1}  \sup_j \|u_{k}\eta(\cdot-j)\|_{L^2}^{\f{p}{2}-1}\sum_{j}   \|u_{k}\eta(\cdot-j)\|_{L^2}^2\\ 
		  \leq & \ C  \|u_k\|_{H^2}^{\f{p}{2}+1} \sup_j \|u_{k}\eta(\cdot-j)\|_{L^2}^{\f{p}{2}-1}\leq  \ C \eps^{\f{p}{2}-1}, 
	\end{align*}

	where we have used \eqref{90b}, along with the uniform bounds on $\|u_k\|_{H^2}$. 
	Hence it follows that $M_\lambda=\lim_k I[u_k]=\lim_k \int_\rone	G[|u_k|^2]\ dx \leq C\eps$, 
	for all $\epsilon>0$, which is a contradiction as $M_\lambda >0.$
	
	\subsubsection{Splitting cannot occur}
Assume, for a contradiction, that the splitting alternative in Theorem \ref{ConcCompb} holds. 	By passing to a subsequence and translating the functions $\rho_k(\cdot-y_k)$, if necessary, we have that there exists $R_k: \lim_k R_k=+\infty$, so that $\text{supp } \rho_k^1 \subset (-\infty, -R_k)$, while $\text{supp }\rho_k^2 \subset (R_k, +\infty)$. 

In fact, introduce functions $\zeta_\pm\in C^\infty(\rone): 0\leq \zeta_\pm(x)\leq 1$, so that 
\begin{eqnarray*}
	\zeta_+(x) &=& (1-\eta(x)) \chi_{(0, +\infty)}(x)=\begin{cases}
		0& x\leq 1\\
		1 & x>2
	\end{cases}\\
	 \zeta_-(x)& = & (1-\eta(x)) \chi_{(-\infty,0)}(x)=\begin{cases}
		1& x\leq -2\\
		0& x>-1
	\end{cases}.
\end{eqnarray*}
Note that $\zeta_-(x) + \eta(x)+ \zeta_+(x) = 1$. These functions induce a decomposition, $u_k=u_{k,-}+u_{k,0}+u_{k,+}$, where
$$
u_{k,\pm}(x):= u_k(x) \zeta_\pm(x/R_k), u_{k,0}=u_k\eta(x/R_k). 
$$
	Given our assumption for splitting, it follows that for some $\al\in (0, L_1)$ and for all $k\geq k_0$, 
	$$
	\left|c_{k} \int u_{k,-}^{2}(x) \ dx-\alpha\right|<2 \epsilon
	, \ \quad\left|c_k \int u_{k,+}^{2}(x)\ dx-(L_{1}-\alpha)\right|<2\epsilon, \ c_k\int u_{k,0}^{2}(x) dx<2\eps.
	$$
	Dividing by $c_k$ yields, for large enough $k$, 
	\begin{equation}
		\label{110b} 
			\left|\int u_{k,-}^{2}(x) \ dx-\f{\alpha}{c_k}\right|<4 \epsilon
		, \ \quad\left| \int u_{k,+}^{2}(x)\ dx-\f{(L_{1}-\alpha)}{c_k}\right|<4\epsilon, \ \int u_{k,0}^{2}(x) dx<2\eps.
	\end{equation}
	We now analyze the quantity $L[u_k]$. We have 
	\begin{align}
		L[u_{k}] & =L[u_{k,-}]+L[u_{k,+}]+L[u_{k,0}]+\nonumber \\
		\label{100b}
		& \ \quad+2 \left(\int (u_{k,-}''+u_{k,+}'') u_{k,0}''-c^{2}\int (u_{k,-}'+ u_{k,+}')u_{k,0}'+
		\int (u_{k,-}+u_{k,+})u_{k,0}\right). 
	\end{align}
	We deal with the main terms first. We have 
	\begin{eqnarray*}
		\int u_{k,\pm}'' u_{k,0}'' dx = 	\int (u_{k}''(x))^2  \zeta_\pm(x/R_k) \eta(x/R_k)dx +E_k, 
	\end{eqnarray*}
	where in the terms $E_k$, at least one derivative has fallen on the cutoffs, so $|E_k|\leq C R_k^{-1} \|u_k\|_{H^2}^2$. Further, integration by parts yields 
	$$
	\left|\int (u_{k,-}'+ u_{k,+}')u_{k,0}' dx\right|=
	\left|\int (u_{k,-}''+ u_{k,+}'')u_{k,0} dx\right|\leq C \|u_k\|_{H^2}\|u_{k,0}\|_{L^2}\leq C \sqrt{\eps}. 
	$$
	by \eqref{110b}. Similarly, 
	$$
	\left|\int (u_{k,-}+u_{k,+})u_{k,0}\right|\leq C \|u_k\|_{L^2} \|u_{k,0}\|_{L^2}\leq C \sqrt{\eps}. 
	$$
	Putting all these estimates together 
 \begin{align*}
 L[u_{k}]-L[u_{k,-}]-L[u_{k,+}] & \geq \ 
 	L[u_{k,0}]+2 	\int (u_{k}''(x))^2  \zeta_\pm(x/R_k) \eta(x/R_k)dx  -C\sqrt{\epsilon}-C R_k^{-1} \\
 	&\geq \ -C\sqrt{\epsilon} - C R_k^{-1},
 \end{align*}
	thereby proving, 
	\begin{equation}
		\label{115b}
				\liminf_{k\to\infty}\left(L[u_{k}]-L[u_{k,-}]-L[u_{k,+}]\right)\geq  -C\sqrt{\epsilon}. 
	\end{equation}
Recall that $L[u_k]=\la$, and after possibly taking a subsequence, we may and do assume that $\lim_k L[u_{k,\pm}]=\si_\pm$. Note that $\si_\pm>0$. Otherwise, say $\si_+=0$, and then  by Lemma \ref{le:10b}, it would follow that $\lim\|u_{k,+}\|_{H^2}=0$, which contradicts \eqref{110b}. Thus, we have established, following \eqref{115b}, 
\begin{equation}
	\label{120b} 
	\si_-+\si_+\leq \la+ C\sqrt{\epsilon}. 
\end{equation} Here, as $\sigma_-$ and $\sigma_+$ are independent of $\epsilon$, we may further assume that $\sigma_{-},\sigma_+<\la.$

We now analyze $I[u_k]=\int G[|u_k|^2]\ dx=\sum_{j=1}^n\f{a_j}{j+1}\int|u_k|^{2j}\ dx$. Recall $u_k=u_{k,-}+u_{k,+}+u_{k,0}$, where $u_{k,-}, u_{k,+}$ have disjoint support. We have 
\begin{eqnarray*}
	I[u_k]= I[u_{k,-}]+I[u_{k,+}] +\ce_k,
\end{eqnarray*}
where the error terms $\ce_k$ consists of linear combination of  integrals 
with all terms from \\ $(u_{k,-}+u_{k,+}+u_{k,0})^{2j}$ for all $j$, with at least one entry $u_{k,0}$. Applying H\"older's and Sobolev embedding,  we have,
$$
|\ce_k|\leq \sum_{j=1}^n C_j \|u_k\|_{L^\infty}^{2j} \|u_k\|_{L^2} \|u_{k,0}\|_{L^2}\leq C\sqrt{\eps}. 
$$
It follows that 
$$
I[u_k]\leq M_{L[u_{k,-}]}+M_{L[u_{k,+}]}+C\sqrt{\eps}. 
$$
After taking limits in $k$ and by inequality \eqref{12b}
$$
 M_\la\leq M_{\si_-}+M_{\si_+}+C\sqrt{\eps} \leq \f{\si_-^2}{\la^2}M_{\la}+\f{\si_+^2}{\la^2}M_{\la}+C\sqrt{\epsilon},
$$ which upon rearranging and using relations \eqref{120b} yields the bound, 
$$
M_\la \leq \f{\la^2}{\la^2-\si_-^2-\si_+^2}C\sqrt{\epsilon}\leq \left(\f{\la^2}{2\si_-\si_+-2\la C\sqrt{\epsilon}}\right)C\sqrt{\epsilon}.
$$
 This is a contradiction, since $M_\lambda>0$, while $\eps$ was taken to be arbitrary. 
 
 \subsubsection{Tightness.}
 So, it follows that the tightness alternative holds. Relabeling $\{u_k\}$ to be the sequence of functions $u_k: = u_k(x-y_k)$, bounded in $H^2$, after choosing a subsequence, converges weakly to $\tilde{U} \in H^2$. 
 After choosing another subsequence, and by the Rellich's compactness theorem, we have a limiting function, $U: \lim_k\|u_k-U\|_{L^2}=0$, and by uniqueness of weak limits in $L^2,$ we have $U=\tilde{U}\in H^2.$ 
 
Immediately, by Sobolev inequality \eqref{34b}, we see that 
$$
\lim_k\|u_k-U\|_{L^p}^p\leq C_p\lim_k \|(u_k-U)''\|_{L^2}^{\(\f{p}{4}-\f{1}{2}\right)} \|(u_k-U)\|_{L^2}^{\(\f{3p}{4}+\f{1}{2}\right)} = 0.
$$ 
Whence, it follows, $I[U]=\lim_k I[u_k]=M_\la$. 

In fact, by inequality \eqref{33b}, $\lim_k\|u_k-U\|_{\dot{H}^1}=0$, and hence $\lim_k \|u_k-U\|_{H^1}=0$. On the other hand,  by the lower semi-continuity of the $H^2$ norm with respect to weak convergence, we have that $\liminf_k \int_{\rone}(u_k'')^{2}\geq \int_{\rone}(U'')^{2}$. Hence, 
\begin{align*}
	\la=\liminf_k L[u_k] &= \ \liminf_k  \int_{\rone}(u_k'')^{2}-c^{2}(u_k')^{2}+u_k^{2}\ dx \\
	& =\ \liminf_k  \int_{\rone}(u_k'')^{2} -  \int c^{2}(U')^{2}+U^{2}\ dx\\
	&\geq \  \int_{\rone}(U'')^{2} -   c^{2}(U')^{2}+U^{2}\ dx=L[U].
\end{align*}
Thus, $L[U]\leq \la$, while $I[U]=M_\la$. Owing to superlinearity of  $M_\lambda$, this presents a contradiction, unless $L[U]=\la$, and hence $U$ is a maximizer. Consequently, $\lim_k \|u''_k\|_{L^2}=\|U''\|_{L^2}$. By weak convergence of $u_k$ to $U$ in $H^2$ and the fact that $\lim_k\|u_k\|_{H^2}=\|U\|_{H^2}$,  it follows that we can upgrade the convergence to strong in $H^2$, i.e.  $\lim_k \|u_k-U\|_{H^2}=0$. 
\end{proof}

\subsection{Euler-Lagrange equation and spectral properties}
 Now that we know that the variational problem \eqref{60b} has a solution, we derive its  Euler-Lagrange equation, along with some extra variational properties. 
 \begin{proposition}
 	\label{prop:20b} 
 	The maximizer $U$ of \eqref{60b}  satisfies the Euler-Lagrange equation, 
 	 	\begin{equation}
 		 		\label{80b} 
 		 		U''''+c^2 U''+U - \kappa^{-1} F(|U|^2) U=0,
 		 	\end{equation}
 		 	where $\kappa^{1}=\f{1}{\la}\int_\rone F[|U|^2]|U|^2 \ dx.$
 	In addition, $U\in H^\infty(\rone)$. 	For the linearized operators, 
 	 	\begin{eqnarray*}
 	 		\cm_- &=& \p_{xxxx}+c^2 \p_{xx}+1 -  \kappa^{-1} F(U^2), \\
 	 		\cm_+ &=& \p_{xxxx}+c^2 \p_{xx}+1 -\kappa^{-1} F(U^2)- 2\kappa^{-1} F'(U^2) U^2.
 	 	\end{eqnarray*}
  	we have 
  		\begin{itemize}
  		\item $\cm_-\geq 0$, $\cm_-[U]=0$ 
  		$ \si_{a.c.}(\cm_-)=[1-\f{c^4}{4}, +\infty)$ and there is the spectral gap property, i.e. there exists $\de>0$, so that
  		$$
  		\cm_-|_{\{\phi\}^\perp}\geq \de. 
  		$$
  		That is,  for all $h\perp U$, 
  		$\dpr{\cm_- h}{h}\geq \de \|h\|_{L^2}^2$. 
  		\item $\cm_+$ has exactly one negative eigenvalue, $\cm_+[U']=0$ and $ \si_{a.c.}(\cm_+)=[1-\f{c^4}{4}, +\infty)$. 
  		\item   $\cm_+$  satisfies the spectral gap property. Specifically, if  $\Psi_0$ is the eigenfunction, corresponding to the unique negative eigenvalue, then 
  		$$
  		\cm_+|_{\{\Psi_0, \ker[\cm_+]\}^\perp}\geq \de. 
  		$$
  	\end{itemize}
 \end{proposition}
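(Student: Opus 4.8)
The plan is to read Proposition \ref{prop:20b} as a consequence of the Lagrange multiplier structure of \eqref{60b}, followed by an index bookkeeping for the quadratic forms of $\cm_\pm$. First I would derive \eqref{80b}: since $L'[U]=2(U''''+c^2U''+U)$ pairs with $U$ to give $2\la\neq 0$, the constraint level set $\{L=\la\}$ is regular at $U$, so the Lagrange multiplier theorem produces $\mu$ with $I'[U]=\mu L'[U]$, i.e. $U''''+c^2U''+U=\mu^{-1}F(U^2)U$. Pairing against $U$ and using $L[U]=\la$ identifies $\mu=\kappa=\f1\la\int F(U^2)U^2$, which is positive because $M_\la>0$ forces $U\not\equiv 0$ and $F$ has a positive coefficient. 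For regularity I would bootstrap: the symbol $\xi^4-c^2\xi^2+1\geq 1-\f{c^4}{4}>0$ (Lemma \ref{le:10b}) makes $\p^4+c^2\p^2+1$ boundedly invertible on every $H^s(\rone)$, so $U=(\p^4+c^2\p^2+1)^{-1}[\kappa^{-1}F(U^2)U]$ gains four derivatives at each pass; since $H^s(\rone)$ is an algebra for $s\geq 1$ and $F$ is polynomial, $U\in H^2\Rightarrow U\in H^6\Rightarrow\cdots\Rightarrow U\in H^\infty$.

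Next I would record the kernels and the essential spectrum. The relation $\cm_-[U]=0$ is exactly \eqref{80b}, while differentiating \eqref{80b} in $x$ and collecting terms yields $\cm_+[U']=0$, reflecting translation invariance. Because $U\in H^\infty\subset C_0(\rone)$, the potentials $\kappa^{-1}F(U^2)$ and $\kappa^{-1}F'(U^2)U^2$ are bounded and vanish at infinity, hence are relatively compact perturbations of $\p^4+c^2\p^2+1$; Weyl's theorem then gives $\si_{a.c.}(\cm_\pm)=\{\xi^4-c^2\xi^2+1:\xi\in\rone\}=[1-\f{c^4}{4},+\infty)$, and this band is purely absolutely continuous by the standard scattering theory for short-range (indeed exponentially decaying, by Theorem \ref{theo:30b}) potentials. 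Note $1-\f{c^4}{4}>0$ precisely because $c<\sqrt2$.

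The heart of the argument is the Morse count. Put $T=\{h:\int F(U^2)Uh=0\}=\ker L'[U]$, a codimension-one subspace with $U\notin T$ since $\int F(U^2)U^2>0$. The second-order necessary condition for the constrained maximum, $\dpr{(I''-\kappa L'')[U]h}{h}\leq 0$ for $h\in T$, unwinds exactly to $\dpr{\cm_+ h}{h}\geq 0$ on $T$, so $n(\cm_+)\leq 1$; on the other hand $\dpr{\cm_+ U}{U}=-2\kappa^{-1}\int F'(U^2)U^4<0$ (using $\cm_-U=0$ and $F'>0$ on $(0,\infty)$), whence $n(\cm_+)\geq 1$ and so $n(\cm_+)=1$. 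For $\cm_-$ I would write $\cm_-=\cm_++W$ with $W=2\kappa^{-1}F'(U^2)U^2\geq 0$, giving $\cm_-\geq 0$ on $T$. The crux is upgrading this to $\cm_-\geq 0$ on all of $L^2$: if $\cm_-$ had a negative direction $\psi$, then $\psi\notin T$ and $U\notin T$, so in the one-dimensional quotient $L^2/T$ their images are proportional and some nonzero $v=U+t\psi\in T$ with $t\neq 0$; but then $\dpr{\cm_- v}{v}=t^2\dpr{\cm_-\psi}{\psi}<0$ (the cross term vanishes since $\cm_-U=0$), contradicting $\cm_-\geq 0$ on $T$. Hence $\cm_-\geq 0$.

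Finally I would establish kernel simplicity and the gaps. If $v\in\ker\cm_-\cap T$ then $0=\dpr{\cm_- v}{v}\geq\dpr{Wv}{v}\geq 0$ forces $\int F'(U^2)U^2v^2=0$, so $v\equiv 0$ on the open set $\{U\neq 0\}$; ODE uniqueness for the fourth-order equation $\cm_-v=0$ (zero Cauchy data at a point) then propagates $v\equiv 0$, and since $T$ is codimension one this bounds $\dim\ker\cm_-\leq 1$, giving $\ker\cm_-=\spn[U]$. As all spectrum below $1-\f{c^4}{4}$ is discrete, $0$ is isolated in $\si(\cm_-)$, which is nonnegative with simple kernel, so $\cm_-\geq\de$ on $\{U\}^\perp$ with $\de$ the distance from $0$ to the rest of the spectrum; the same isolation argument, after removing the negative eigenspace $\spn[\Psi_0]$ and $\ker\cm_+$, yields $\cm_+\geq\de$ on $\{\Psi_0,\ker[\cm_+]\}^\perp$. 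I expect the genuine obstacle to be the positivity $\cm_-\geq 0$ together with the simplicity of its kernel, precisely because the fourth-order operator admits no maximum principle; the quadratic-form/codimension argument above is what I would use in place of Perron–Frobenius.
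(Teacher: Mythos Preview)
Your argument is correct and follows a route that is structurally cleaner than, though parallel to, the paper's. The paper does not invoke the abstract Lagrange multiplier theorem but instead works with the explicit curve $u_\eps=\sqrt{\la}\,(U+\eps h)/\sqrt{L[U+\eps h]}$ on the constraint surface and expands $I[u_\eps]$ to second order in $\eps$; because this curve is defined for \emph{every} test direction $h$ (not only $h\in T$), the second-order term yields directly the quantitative lower bound
\[
\dpr{\cm_- h}{h}\ \geq\ 2\kappa^{-1}\!\int F'(U^2)U^2\Bigl(h-\tfrac{A}{\la}U\Bigr)^2\,dx,
\]
from which $\cm_-\geq 0$ is immediate and the spectral gap on $\{U\}^\perp$ is then extracted via a minimizing-sequence argument. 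By contrast, you obtain $\cm_+\geq 0$ only on the tangent space $T$, push this to $\cm_-\geq 0$ on $T$ via the nonnegative potential $W=2\kappa^{-1}F'(U^2)U^2$, and lift to all of $L^2$ by the codimension-one trick exploiting $\cm_-U=0$; you then prove $\ker\cm_-=\spn[U]$ by ODE unique continuation and read the gap off from Weyl's location of the essential spectrum. The paper's approach has the advantage of producing the pointwise quadratic-form lower bound above as a by-product; yours avoids the explicit expansion entirely and makes the gap a soft consequence of kernel simplicity plus discreteness below $1-\tfrac{c^4}{4}$.

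One small caution: your appeal to Theorem~\ref{theo:30b} for the purely absolutely continuous nature of $[1-\tfrac{c^4}{4},\infty)$ is a forward reference (that theorem is proved after this proposition), and $U\in H^\infty$ alone does not place the potential in a short-range class. The paper's proof does not address the a.c.\ claim either, and nothing downstream uses it, so this is a cosmetic rather than substantive gap.
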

{\bf Remark:} By setting the Lagrange multiplier $\kappa^{-1}=\gamma$, which invariably depends on parameters $c$ and $\la$, and with the assignment 
$\phi:= U$, we have that $\phi$ satisfies \eqref{20b} and also $\cl_\pm=\cm_\pm$ and hence, all spectral properties of $\cl_\pm$ stated in Theorem \ref{theo:10b} follow from the ones presented in Proposition \ref{prop:20b}. 
\subsubsection{The derivation of the Euler-Lagrange equation}
\begin{proof} 
	Let $U\in H^{2}(\mathbf{R})$ be the maximizer of $I[\cdot]$ subject
	to the constraint, $L[u]=\lambda$, so that $I[U]=M_{\lambda}$. 
	Let $\epsilon>0$ be small and $h$ be a real valued  test
		function. Consider 
		$$
		u_{\epsilon}:=\sqrt{\lambda}\frac{U+\epsilon h}{\left(L[U+\epsilon h]\right)^{1/2}},
		$$
		Clearly, $u_{\epsilon}$ satisfies the constraint equation $L[u_{\epsilon}]=\lambda$. 
		We expand $I[u_{\epsilon}]$ and ignore terms of order $O(\epsilon^{3})$ or higher. 
		We have, 
		$$
		I[u_{\epsilon}]=\int	_\rone G\left[\f{\la}{L[U+\epsilon h]} |U+\epsilon h|^2\right] \ dx.
		$$
		 
		Expanding out the terms, we get, 
		\begin{eqnarray*}
		& & 	L[U+\epsilon h] =\int(U''+\epsilon h'')^{2}-c^{2}(U'+\epsilon h')^{2}+(U+\epsilon h)^{2}\ dx \\ 
		& &  =\left(\int_{\mathbf{R}}(U'')^{2}-c^{2}(U')^{2}+U^2\ dx\right)+2\epsilon\left(\int_{\mathbf{R}}(U''h'')-c^{2}(U'h')+U h\ dx\right) \\  & & +\epsilon^{2}\left(\int_{\mathbf{R}}(h'')^{2}-c^{2}(h')^{2}+h^{2}\ dx\right)   =\lambda\left(1+2\epsilon\frac{A}{\lambda}+\epsilon^{2}\frac{B}{\lambda}\right), 
	\end{eqnarray*}
where, for readability we have used notations, 
\begin{eqnarray*}
	A &=& \int_{\mathbf{R}}U''h''-c^{2} U'h'+U h\ dx,   = \langle (\p_{xxxx}+c^2\p_{xx} +1)U, h\rangle\\
	B &=& \int_{\mathbf{R}}(h'')^{2}-c^{2}(h')^{2}+h^{2} \ dx=\langle (\p_{xxxx}+c^2\p_{xx} +1)h, h\rangle .	
\end{eqnarray*}
Therefore, 
\begin{eqnarray*}
	\left(L[U+\epsilon h]\right)^{-1} & = &\lambda^{-1}\left(1+2\epsilon\frac{A}{\lambda}+\epsilon^{2}\frac{B}{\lambda}\right)^{-1}\nonumber \\
	& =& \lambda^{-1}\left[1-2\epsilon\frac{A}{\lambda}+\epsilon^{2}\left(4\frac{A^{2}}{\lambda^{2}}-\frac{B}{\lambda}\right)+O(\epsilon^{3})\right].
\end{eqnarray*}
Combining this with the expansion for $|U+\epsilon h|^2$, we get 
\begin{align*}
|u_\epsilon|^2=\f{\la}{L[U+\epsilon h]} |U+\epsilon h|^2 & =\left[1-2\epsilon\frac{A}{\lambda}+\epsilon^{2}\left(4\frac{A^{2}}{\lambda^{2}}-\frac{B}{\lambda}\right)\right] \left[ |U|^2 + 2\epsilon U h + \epsilon^2 h^2 \right]\\ 
& = \ \underbrace{|U|^2}_{a} + \epsilon \underbrace{\left[ -2\f{A}{\la}|U|^2+2hU\right]}_b \\
& \qquad \qquad +  \epsilon^2 \underbrace{\left[ -4 \f{A}{\la} U	 h + 4\f{A^2}{\la^2} |U	|^2 -\f{B}{\la}|U	|^2 + h^2    \right]}_c .
\end{align*}
As $G(\cdot)$ is a polynomial, we note the expansion, 
		 \begin{align*}
		 G[a+\epsilon b +\epsilon^2 c] & = G[a] +\epsilon G'[a] b +\epsilon^2\left( G'[a]c +\f{G''[a]}{2}b^2\right).
 \end{align*}

Using the terms $a,b$ and $c$ as above, and the relations $G'=F, G'' = F'$, we get the expansion for $I[u_\epsilon]$ as, 
\begin{align}
I[u_\epsilon] & = \int_\rone G[|u_\epsilon|^2] \ dx \label{390b}  \\ 
& = \int_\rone G[|U^2|] \ dx + \epsilon\int	F[|U|^2]\left[ -2\f{A}{\la}|U|^2+2hU\right] \ dx \notag \\
& \qquad\qquad + \epsilon^2\left( \int	 F[|U|^2] \left[ -4 \f{A}{\la} U	 h + 4\f{A^2}{\la^2} |U	|^2 -\f{B}{\la}|U	|^2 + h^2    \right]\right. \notag \\
& \qquad \qquad \qquad \qquad \qquad\qquad \qquad\qquad  + \left. \f{1}{2}\int   F'[|U|^2] \left[ -2\f{A}{\la}|U|^2+2hU\right]^2 \ dx \right)\notag
\end{align}

Using the fact that $\int_\rone G[|U|^2] \ dx = M_\lambda$ and $I[u_\epsilon]\leq M_\lambda $ for $|\epsilon|<< 1$ implies that the term of order $\epsilon$ has to be zero, i.e.

\begin{align}
\int	F[|U|^2]\left[ -2\f{A}{\la}|U|^2+2hU\right] & =A\f{2}{\la}\int F[|U|^2]|U|^2 \ dx -2 \int	F[|U|^2] U h \ dx \label{3100b} \\
& = 2\kappa\int_{\rone}U''h''-c^{2} U'h'+U h \ dx -2\int_\rone F[|U|^2] U h \ dx =0,\notag 
\end{align}
where $\kappa$ is the non negative constant, $\kappa = \f{1}{\la}\int_\rone F[|U|^2]|U|^2 \ dx $.

After factoring $\kappa$, this is equivalent to 

\begin{equation} \label{3250b}
\langle (\p_{xxxx}+c^2\p_{xx} +1-\kappa^{-1} F[|U|^2])U, h \rangle =\langle \cm_-U,h\rangle=0.
\end{equation}
Since this is true for arbitrary $h$, we have thus shown that the maximizer $U$, is a distributional solution of \eqref{80b}.

Recall that $U\in H^2(\rone)$ by construction. Note that as one can recast \eqref{80b} in the form 
$$
U=\f{\la}{M_\la}(-\p_{xxxx}+c^2\p_{xx}+1)^{-1} U^3,
$$
where the Green's function operator $(-\p_{xxxx}+c^2\p_{xx}+1)^{-1}$ is given by the multiplier $\f{1}{\xi^4-c^2 \xi^2+1}$. Note that since $c\in (0, \sqrt{2})$, the multiplier $\f{1}{\xi^4-c^2 \xi^2+1}$ is bounded and we have that $(-\p_{xxxx}+c^2\p_{xx}+1)^{-1}: H^s\to H^{s+4}$ for any $s>0$. Thus one can bootstrap the smoothness of $U$ to any given $s$. Indeed, estimating for $s=0$, 
$$
\|(-\p_{xxxx}+c^2\p_{xx}+1)^{-1} U^p\|_{H^4}\leq C \|U^p\|_{L^2}= C \|U\|_{L^{2p}}^p \leq C \|U\|_{H^2}^p, 
$$
whence $U\in H^4(\rone)$. Iterating on this bound, we have $U,U',U'',U''' \in L^\infty	$, and thus $U^p\in H^4(\rone)$, and inverting the operator, 
$$
\|U\|_{H^8} = \|(-\p_{xxxx}+c^2\p_{xx}+1)^{-1} U^p\|_{H^8}\leq \|U^p\|_{H^4}
$$ So, $U\in H^\infty(\rone)$.

Regarding the linearized operators, we have already established int \eqref{3250b} that
 $\dpr{\cm_-U}{h}=0$ for all real-valued test functions $h$, whence $\cm_- U=0$.

As $U$ is the maximizer of the functional $I[\cdot]$, the second derivative condition necessitates the term of order $\epsilon^2$, in the expansion \ref{390b}, be non-positive or equivalently, 

\begin{align}\label{3110b}
\begin{split}
0 & \leq  \int	 F[|U|^2] \left[ 4 \f{A}{\la} U	 h - 4\f{A^2}{\la^2} |U	|^2 +\f{B}{\la}|U	|^2 - h^2    \right]\ dx\\
& \qquad \qquad \qquad  - \f{1}{2}\int   F'[|U|^2] \left[ -2\f{A}{\la}|U|^2+2hU\right]^2 \ dx   \\
& = \int	 F[|U|^2] \left[ \f{B}{\la}|U	|^2 - h^2    \right] \ dx - \int  2 F'[|U|^2]  \left[ -\f{A}{\la}|U|^2+hU\right]^2 \ dx, 
\end{split}
\end{align}

where the last equality follows from \eqref{3100b}. Owing to the non negativity of the second integral in \ref{3110b}, it follows that, 

$$
0\leq \int	 F[|U|^2] \left[ \f{B}{\la}|U	|^2 - h^2    \right] \ dx= \kappa \left(B - \kappa^{-1}\int	F[|U|^2] h^2 \ dx\right),
$$
which, by the definition of $B$, reasons that
$$
0\leq \langle \cm_- h,h\rangle,
$$ whereby establishing the non-negativity of the operator $\cm_-.$

\subsubsection{$\cm_-$ has spectral gap at zero} 
We now show that $\cm_-$ has spectral gap property at zero. Observe that inequality \eqref{3110b}, in fact, tells us that 
\begin{equation}\label{3500b}
\int  2 F'[|U|^2]U^2  \left[ -\f{A}{\la}|U|+h\right]^2 \ dx\leq \langle \cm_- h,h\rangle.
\end{equation}

 We now claim that $\cm_- |_{\{U\}^\perp}\geq \de>0.$ Assuming the contrary for a contradiction, there exist a sequence $h_n: h_n\perp U, \|h_n\|_{L^2}=1$, such that $\lim_{n\to \infty}\dpr{\cm_- h_n}{h_n}=0$. Then inequality \eqref{3500b} implies that 
 \begin{equation}\label{3160b}
 \lim_{n\to \infty} \int  2 F'[|U|^2]U^2  \left[ -\mu_n U+h_n\right]^2 \ dx = 0,
 \end{equation}
 
 where $\mu_n =\f{A_n}{\la}=\f{\kappa^{-1}\int F[U^2]Uh_n\ dx}{\la} $. Observe that the sequence $\{\mu_n\}$ is bounded as 
 \begin{equation}
 |\mu|\leq \f{\kappa^{-1}}{\la} F[\|U\|_\infty^2]\int |Uh_n|\ dx \leq C \|U\|_2\|h_n\|_2 <C.
 \end{equation}
 So, choose a subsequence such that, $\mu_n \to \mu_0$. 
 
 Suppose $\mu_0 =0$, then \eqref{3160b} implies that, 
 \begin{align*}
 \lim_n \int 2F'[U^2]U^2h_n^2 \ dx = 0,
 \end{align*}
 which further implies, by nature of the function $F$, that 
 \begin{equation}
\lim_n \int F[U^2]h_n^2 \ dx =0. 
 \end{equation}

 But then, 
 \begin{align*}
 0=\lim_n \dpr{\cm_- h_n}{h_n}  & = \ \lim_n \int (h_n'')^2-c^2 (h_n')^2+h_n^2 - \kappa^{-1} \int F[U^2]h_n^2 \\
 & = \ \lim_n \int (h_n'')^2-c^2 (h_n')^2+h_n^2 \ dx.
 \end{align*}
So, it follows that $\lim_n L[h_n]=0$, whence $\lim_n \|h_n\|_{L^2}=0$, a contradiction. 

If $\mu_0\neq 0$, then consider the open set $\ca=\{x\in \rone: U(x)\neq 0\}$, so that $ (F'[|U|^2]U^2)(x)\neq 0$ for $x\in \ca$. Note that since  $U$ is a smooth solution to a regular differential equation, the set $\ca^c$ cannot have points of accumulation, except at infinity. For every test function, $q\in C^\infty_0(\ca)$, we have that $F'[|U|^2]U^2|_{\text{supp }\{ q\}}\geq C_q$, so 
 $$
 |\dpr{q}{h_n-\mu_0 U}|^2\leq \|q\|^2_{L^2} \int_{\text{supp }\{ q\}} |h_n-\mu_0 U|^2 \leq \f{\|q\|_{L^2}^2}{C_q} \int F'[U^2]U^2 |h_n-\mu_0 U|^2 dx,
 $$
 whence $\lim_n \dpr{q}{h_n-\mu_0 U}=0$ for all $q\in C^\infty_0(\ca)$. Let now $\eps>0$ be arbitrary. Then, we can find $q_\eps \in C^\infty_0(\ca):  \|U-q_\eps\|_{L^2}<\eps$. Then, since $h_n\perp U$, 
 \begin{eqnarray*}
 	 |\mu_0| \|U\|_{L^2}^2 &=& |\dpr{U}{h_n-\mu_0 U}|\leq |\dpr{q_\eps}{h_n-\mu_0 U}|+|\dpr{U-q_\eps}{h_n-\mu_0 U}|\leq \\
 	 &\leq &  |\dpr{q_\eps}{h_n-\mu_0 U}|+\eps\|h_n-\mu_0 U\|_{L^2}\leq |\dpr{q_\eps}{h_n-\mu_0 U}|+\eps(1+|\mu_0| \|U\|_{L^2}). 
 \end{eqnarray*}
Taking $\lim_n $ in the estimate above leads to 
$$
|\mu_0| \|U\|_{L^2}^2\leq \eps(1+|\mu_0| \|U\|_{L^2}),
$$
which is contradictory, as $\mu_0\neq 0$ and $\eps$ is arbitrary. 

\subsubsection{Properties of $\cm_+$}
Upon a closer examination of the second integral in \ref{3110b}, the inequality reads, 
$$
0\leq \int	 F[|U|^2] \left[ \f{B}{\la}|U	|^2 - h^2    \right] \ dx - \int 2 F'[|U|^2] \left[U^2h^2 +\f{A^2}{\la^2}|U|^4 -2\frac{A}{\la}|U|^2U h\right]\ dx.
$$ Factoring out $\kappa$, this is equivalent to 
\begin{align}\label{3120b}
0 & \leq \langle \cm_+ h,h \rangle + \kappa^{-1}	\f{A}{\la} \int	 2F'[|U|^2]|U|^2Uh \ dx - \kappa^{-1} \f{A^2}{\la^2} \int 2F'[|U|^2]|U|^4 \ dx \nonumber\\
& \leq \langle \cm_+ h,h \rangle + \kappa^{-1}	\f{A}{\la} \int	 2F'[|U|^2]|U|^2Uh \ dx
\end{align}

Let us note that \eqref{3120b} already implies that $\cm_+$ has at most one negative eigenvalue. Indeed, restricting  $h\perp F'[U^2]U^2U$, we obtain 
$\dpr{\cm_+ h}{h}\geq 0$, which by Rayleigh-Ritz min-max theorem
implies that $n(\cm_+)\leq 1$. On the other hand, testing with $U$, we obtain from the Euler-Lagrange equation, 
$$
\dpr{\cm_+ U}{U}= - 2\kappa^{-1} \int F'[|U|^2]U^4 dx <0, 
$$
whence $n(\cm_+)=1$. 

\end{proof}

 \section{Proof of Theorem \ref{theo:20b}}
 We see that the relevant  eigenvalue problem is in the form 
 $$
 \cj \cl \begin{pmatrix}
 	f \\ g
 \end{pmatrix}=\la \begin{pmatrix}
 f \\ g
\end{pmatrix},
 $$
 where 
 $$
 \cj= \begin{pmatrix}0 & 1\\
 	-1 & 2c\partial_x
 \end{pmatrix},\ \  \cl= \begin{pmatrix}\mathcal{L}_{+} & 0\\
 	0 & 1
 \end{pmatrix}.
 $$
 Clearly, this fits with Corollary \ref{cor:po} as $\cj^*=-\cj$, $\cl^*=\cl$. Also, note that from Theorem \ref{theo:10b}, we have that the Morse index 
 $$
 n^-(\cl)=n^-(\cl_+)+n(Id)=n(\cl_+)=1. 
 $$
 Also, 
 $$
 \ker(\cl)=\begin{pmatrix}
 	\ker(\cl_+)  \\ 0
 \end{pmatrix}=\spn \left[ \begin{pmatrix}
 \phi' \\ 0
\end{pmatrix} \right],
 $$
 by assumption. We now find the elements of the generalized kernel, as prescribed by the instability index theory. Specifically, we look for solution $\begin{pmatrix}
 	f \\ g
 \end{pmatrix} $ of 
$$
\cj \cl  \begin{pmatrix}
	f \\ g
\end{pmatrix} = \begin{pmatrix}
\phi'\\ 0
\end{pmatrix} 
$$
 As it is easy to see $\cj$ is invertible, and in fact $\cj^{-1}= \begin{pmatrix}2c\p_x  & -1\\
 	1 & 0
 \end{pmatrix}$, we need to solve 
$$
  \cl  \begin{pmatrix}
	f \\ g
\end{pmatrix} = \cj^{-1} \begin{pmatrix}
	\phi'\\ 0
\end{pmatrix}  = \begin{pmatrix}
2c \phi''\\ \phi'
\end{pmatrix}. 
$$
 Immediately, this reduces to $g=\phi'$, $\cl_+ f= 2c\phi''$. If $\ker(\cl_+)=\spn[\phi']$, this is uniquely solvable in $\{\phi'\}^\perp$ by Fredholm theory, so $f=2c \cl_+^{-1}[\phi'']\in \{\phi'\}^\perp$. One should also consider the possibility for further elements of the generalized kernel. This amounts to solving the following 
 $$
 \ch  \begin{pmatrix}
 	z_1 \\ z_2
 \end{pmatrix} = \cj^{-1} \begin{pmatrix}
 2c \cl_+^{-1}[\phi'']\\ \phi'
\end{pmatrix} = \begin{pmatrix}
4c^2  \p_x \cl_+^{-1} [\phi'']-\phi'\\  2c   \cl_+^{-1} [\phi'']
\end{pmatrix}. 
 $$
 The second equation is easily solvable, but the first one requires Fredholm compatibility condition, namely right hand side,  $4c^2  \p_x \cl_+^{-1} [\phi'']-\phi'$,  needs to be orthogonal to $\ker(\cl_+)=\spn[\phi']$. This means that  further elements of the generalized kernel exist if and only if 
 $$
 0=\dpr{4c^2  \p_x \cl_+^{-1} \phi''-\phi'}{\phi'}=-(4c^2 \dpr{\cl_+^{-1} \phi''}{\phi''}+ \|\phi'\|^2)
 $$
 Thus, no further elements of the generalized kernels exist, provided 
 $$
 4c^2 \dpr{\cl_+^{-1} \phi''}{\phi''}+ \|\phi'\|^2\neq 0.
 $$
  Finally, we need to verify the sign of the quantity of the matrix $\cd$. We need to compute 
 \begin{align*}
  \cd_{11}  =\dpr{\cl \begin{pmatrix}
  		2c \cl_+^{-1} \phi'\\ \phi'
  	\end{pmatrix}}{\begin{pmatrix}
  	2c \cl_+^{-1} \phi'\\ \phi'
  \end{pmatrix} } & =\dpr{\begin{pmatrix}
  2c \phi''\\ \phi'
\end{pmatrix}}{\begin{pmatrix}
2c \cl_+^{-1} \phi''\\ \phi'
\end{pmatrix}}\\
&=\  4c^2 \dpr{\cl_+^{-1} \phi''}{\phi''}+ \|\phi'\|^2.
  \end{align*}
  Thus, as $n(\ch)=1$, stability is indeed equivalent to $n(\cd)=1$, which is $4c^2 \dpr{\cl_+^{-1} \phi''}{\phi''}+ \|\phi'\|^2<0$. We have also shown  that as long as this quantity is negative, no further element of the generalized kernel are possible, so the index count is complete and it yields $k_{\text{Ham.}}=0$. 
  
  Note that if the condition $4c^2 \dpr{\cl_+^{-1} \phi''}{\phi''}+ \|\phi'\|^2=0$ is met, even though further elements of the generalized kernels are possible, we do have that $k_{\text{Ham.}}=n^-(\cl)-n^{\leq 0}(\cd)=1-n(\cd)\leq 1-n(\cd_{11})=0$, whence stability follows.


 \section{Exponential decay of the wave $\phi$:  Proof of Theorem \ref{theo:30b}}
 Recall that the wave $\phi\in H^\infty(\rone)$ satisfies \eqref{20b}. In order to extract its  asymptotics for large $x$, we shall need to localize in space. 
 
 \subsection{Preliminary steps}
 
 More precisely, let $\psi$ be a smooth cutoff function such that $\psi(x)=0$ for
 $|x|<1/2$ and $\psi(x)=1, |x|>1$, with $\|\psi^{(k)}\|_{\infty}<C$ for
 $k\in\{1,2,3,4\}.$ Let $N>>1$ and $\psi_{N}(x):=\psi(\frac{x}{N})$.  Multiplying the profile 
 equation \eqref{20b} with $\psi_{N},$ we obtain, 
 $$
 \phi_{xxxx}\psi_{N}+c^{2}\phi_{xx}\psi_{N}+\phi\psi_{N}=\gamma F[\phi^2]\phi \psi_{N}.
 $$
 Combining $\phi$ together with $\psi_{N}$ generates a few commutator terms. Specifically, we have the equation, 
\begin{equation}
	\label{220b} 
		\left(\phi\psi_{N}\right)_{xxxx}+c^{2}\left(\phi\psi_{N}\right)+\phi\psi_{N} =\gamma F[\phi^{2}]\phi\psi_{N}+E_N(x),
\end{equation}
where the error terms\footnote{Note that in the expression for $E_N$, the cutoffs are always differentiated, which means that its support is inside $\f{N}{2}<|x|<N$. } are collected herein 
\begin{align*}
E_N(x) &:=\left[4\phi_{xxx}(\psi_{N})_{x}+6\phi_{xx}(\psi_{N})_{xx}+4\phi_{x}(\psi_{N})_{xxx}+\phi(\psi_{N})_{xxxx}\right]\\
& \qquad \qquad \qquad \qquad+c^{2}\left[2\phi_{x}(\psi_{N})_{x}+\phi(\psi_{N})_{xx}\right],
\end{align*}

Inverting the operator $\p_{xxxx}+c^{2}\p_{xx}+1$ in \eqref{220b}  leads to 
 $$
 \phi\psi_{N}(x)=\left(\p_{xxxx}+c^{2}\p_{xx}+1\right)^{-1}\left(\gamma F[\phi^{2}]\phi\psi_{N}+E_N\right),
 $$
 or
 \begin{equation}
 	\label{230b} 
 	\phi\psi_{N}=K\ast(\gamma F[\phi^{2}]\phi\psi_{N}+E_N), \hat{K}(\xi)=\f{1}{\sqrt{2\pi}} \frac{1}{\xi^{4}-c^{2}\xi^{2}+1}
 \end{equation}
 We now need two technical lemma, which will allow us to accurately estimate the decay of $\phi$, based on the representation 
 \eqref{230b}. 
 \subsection{Two technical lemmas}
 The first one provides exponential decay estimates for the kernel $K$, along with its derivatives, up to third order. 
 \begin{lemma}(Exponential bounds for $K$ and its derivatives)
 	\label{le:30b} 
 	
 	Let $c\in [0, \sqrt{2})$. Then, $K, K', K'', K'''$ all have  exponential decay. In fact, 
 	\begin{equation}
 		\label{240b} 
 		|K(x)|+|K'(x)|+|K''(x)|+|K'''(x)|\leq C e^{-\f{\sqrt{2-c^2}}{2}|x|}.
 	\end{equation}
 The stated exponential decay is sharp. 
 \end{lemma}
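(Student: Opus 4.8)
The plan is to compute $K$ and its first three derivatives by contour integration and to read off the decay rate from the location of the poles of the meromorphic integrand. With the stated convention, for $x\in\rone$ and $j=0,1,2,3$,
$$
K^{(j)}(x)=\f{1}{2\pi}\int_{-\infty}^{\infty}\f{(\im\xi)^{j}e^{\im x\xi}}{\xi^{4}-c^{2}\xi^{2}+1}\,d\xi.
$$
First I factor the denominator. Setting $u=\xi^{2}$, the equation $u^{2}-c^{2}u+1=0$ has roots $u_{\pm}=\f{c^{2}\pm \im\sqrt{4-c^{4}}}{2}$, which for $c\in[0,\sqrt{2})$ are genuinely complex with $|u_{\pm}|=1$; hence $u_{\pm}=e^{\pm\im\theta}$ with $\cos\theta=\f{c^{2}}{2}$ and $\theta\in(0,\pi/2]$. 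The four roots of the quartic are therefore $\pm e^{\pm\im\theta/2}$, all distinct (so every pole is simple) and none real. The two poles in the upper half-plane, $e^{\im\theta/2}$ and $-e^{-\im\theta/2}$, both have imaginary part $\sin(\theta/2)$, and the half-angle identity $\sin^{2}(\theta/2)=\f{1-\cos\theta}{2}=\f{2-c^{2}}{4}$ gives $\sin(\theta/2)=\f{\sqrt{2-c^{2}}}{2}$, exactly the claimed rate.

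Next, for $x>0$ I close the contour in the upper half-plane. For $j=0,1,2$ the integrand is $O(|\xi|^{-2})$ and the large semicircle contributes nothing by the usual estimate; for $j=3$ the integrand is only $O(|\xi|^{-1})$, so I invoke Jordan's lemma, whose hypotheses hold since $\f{(\im\xi)^{3}}{\xi^{4}-c^{2}\xi^{2}+1}\to 0$ uniformly as $|\xi|\to\infty$ in the closed upper half-plane. Writing $P(\xi)=\xi^{4}-c^{2}\xi^{2}+1$, the residue at a simple pole $\xi_{0}$ is $\f{(\im\xi_{0})^{j}e^{\im x\xi_{0}}}{P'(\xi_{0})}$ with $P'(\xi)=2\xi(2\xi^{2}-c^{2})$, and at the two upper poles $2\xi_{0}^{2}-c^{2}=\pm\im\sqrt{4-c^{4}}$. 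Summing the two residues into a real expression yields, for $x>0$, a closed form of the type
$$
K^{(j)}(x)=\f{1}{\sqrt{4-c^{4}}}\,e^{-x\sin(\theta/2)}\,P_{j}\!\big(x\cos(\theta/2)-\tfrac{\theta}{2}\big),
$$
where $P_{j}$ is a bounded trigonometric factor (for instance $P_{0}(s)=\cos s$), each carrying the common envelope $e^{-x\sin(\theta/2)}=e^{-\f{\sqrt{2-c^{2}}}{2}x}$. Since $K,K',K'',K'''$ are bounded near the origin and the estimate is asserted only for the decay at infinity, this establishes the bound for $x>0$; evenness of $\hat{K}$ makes $K$ even, so the bound for $x<0$ follows with $x$ replaced by $|x|$, and the derivatives, while alternating in parity, obey the same $|x|$-bound.

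Finally, sharpness is read off the same closed form: the amplitude $\f{1}{\sqrt{4-c^{4}}}$ is strictly positive and the trigonometric factor does not vanish identically, so along a suitable sequence $x_{n}\to\infty$ (chosen so that $P_{0}$ stays bounded away from zero) one has $|K(x_{n})|\geq c\,e^{-\f{\sqrt{2-c^{2}}}{2}x_{n}}$, ruling out any faster exponential rate. I expect the only genuine obstacle to be the $j=3$ case, where absolute integrability fails and Jordan's lemma—rather than a crude semicircle bound—must be used to justify closing the contour; the remaining bookkeeping, namely evaluating $P'(\xi_{0})$, combining the two conjugate-type residues into a real expression, and checking the half-angle identity, is routine.
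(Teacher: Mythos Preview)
Your proof is correct. The paper takes a different route: instead of applying the residue theorem directly to the quartic denominator, it first performs a real partial-fraction decomposition over the quadratic factors $\xi^{2}\pm\sqrt{2+c^{2}}\,\xi+1$, completes the square to $\eta^{2}+b^{2}$ with $b=\f{\sqrt{2-c^{2}}}{2}$, and then appeals to the tabulated integrals $\int_{0}^{\infty}\f{\cos(zx)}{z^{2}+1}\,dz=\f{\pi}{2}e^{-|x|}$ and $\int_{0}^{\infty}\f{z\sin(zx)}{z^{2}+1}\,dz=\f{\pi}{2}\,\mathrm{sgn}(x)\,e^{-|x|}$. Your approach---locating the poles $\pm e^{\pm\im\theta/2}$ with $\cos\theta=\f{c^{2}}{2}$, identifying the imaginary part $\sin(\theta/2)=\f{\sqrt{2-c^{2}}}{2}$ via the half-angle formula, and invoking Jordan's lemma for $j=3$---is more direct and makes sharpness transparent, since the closed form $\f{1}{\sqrt{4-c^{4}}}e^{-|x|\sin(\theta/2)}P_{j}(\cdot)$ exhibits both the envelope and a nonvanishing oscillatory amplitude. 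The paper's route is essentially a real-variable repackaging of the same computation (the two quadratic factors are exactly the products of conjugate pairs of your poles), avoiding explicit contour integration at the cost of the partial-fraction bookkeeping; it also does not spell out the sharpness argument, which you handle cleanly.
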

{\bf Remark:} One can in fact compute $K, K', K'', K'''$ explicitly, but we shall not do so here, as we only need the exponential estimates.
 We postpone the proof of Lemma \ref{le:30b} for the Appendix. 
 
 The other lemma that we need is about long range interactions. 
 \begin{lemma}[Estimate for long-range interactions]
 	\label{le:40b} 
 	
 	Let $0<q<1$ and $\{a_N\}_N$ be a positive bounded sequence, so that for every $\epsilon>0$, there exists $N_0=N_0(\eps)$, so that for all $N>N_0$, 
 	\begin{equation}
 		\label{300b}
 		a_N\leq \epsilon \sum_{m=1}^{N/2} q^{-m} a_{N-m}.
 	\end{equation}
 	Then, there exists $M_1$ and $\sigma>0$, so that 
 	\begin{equation}
 		\label{310b}
 		a_N\leq M_1 q^{-N\sigma}.
 	\end{equation}
 \end{lemma}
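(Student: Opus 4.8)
The plan is to establish the bound \eqref{310b} by strong induction on $N$, using the exponential ansatz $a_N\le M_1 q^{-\sigma N}$ and exploiting the fact that the constant $\epsilon$ in \eqref{300b} may be taken arbitrarily small (at the price of enlarging $N_0$). The structural feature that makes a pure induction close is that the recursion \eqref{300b} expresses $a_N$ only through the block of strictly earlier indices $\{N-m:1\le m\le N/2\}=\{N/2,\dots,N-1\}$, i.e.\ it reaches back no further than $N/2$. First I would fix an exponent $\sigma>0$ in an admissible range (constrained below) and leave the constant $M_1$ to be chosen last.

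For the inductive step, suppose the ansatz holds for all $k<N$ and that $N>N_0(\epsilon)$, so that \eqref{300b} applies. Substituting the inductive hypothesis and factoring out $q^{-\sigma N}$ gives
\begin{align*}
a_N \le \epsilon \sum_{m=1}^{N/2} q^{-m} a_{N-m}
\le \epsilon M_1 \sum_{m=1}^{N/2} q^{-m} q^{-\sigma(N-m)}
= \epsilon M_1 q^{-\sigma N}\sum_{m=1}^{N/2} q^{-(1-\sigma)m}.
\end{align*}
The crux is to bound the remaining sum uniformly in $N$: it is a geometric series with ratio $q^{-(1-\sigma)}$, so I would fix $\sigma$ precisely in the range where this ratio is strictly less than $1$. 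Then $\sum_{m=1}^{N/2} q^{-(1-\sigma)m}\le \sum_{m=1}^{\infty} q^{-(1-\sigma)m}=:C_\sigma<\infty$, a constant independent of $N$, and the estimate collapses to $a_N\le \epsilon\,C_\sigma M_1 q^{-\sigma N}$.

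It then remains to choose the constants in the correct order to avoid circularity: fix the admissible $\sigma$ (this determines $C_\sigma$); choose $\epsilon>0$ so small that $\epsilon C_\sigma\le 1$, say $\epsilon=1/(2C_\sigma)$ (this fixes the threshold $N_0=N_0(\epsilon)$ supplied by the hypothesis); and only then select $M_1$. With these choices the inductive step yields $a_N\le M_1 q^{-\sigma N}$, closing the induction. The base case is handled by the boundedness hypothesis $a_N\le M$: since the recursion reaches back only to $N/2$, it suffices to force the ansatz directly on the finite initial block $N\le 2N_0$, which holds once $M_1$ is taken large enough that $M_1 q^{-\sigma N}\ge M$ throughout that range. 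For every $N>2N_0$ the indices $N-m\ge N/2>N_0$ appearing on the right already satisfy the ansatz by strong induction, so no further input is needed.

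The main obstacle is the bookkeeping around the truncated sum. The most heavily weighted contribution, $q^{-N/2}a_{N/2}$, is exactly the term capable of destroying a naive estimate; controlling it is what forces the geometric ratio $q^{-(1-\sigma)}$ onto the convergent side of $1$, and this is precisely the condition that selects the admissible exponent $\sigma$. Once that condition is arranged, the arbitrary smallness of $\epsilon$ absorbs the geometric constant $C_\sigma$, while boundedness of the sequence disposes of the finitely many initial indices. The delicate point to verify carefully is that the ordered chain of choices $\sigma\to C_\sigma\to(\epsilon,N_0)\to M_1$ is non-circular and that the index range $[N/2,N-1]$ invoked by \eqref{300b} for $N>2N_0$ lies entirely within the already-established block.
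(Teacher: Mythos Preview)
Your strong-induction argument is correct: once $\sigma$ is fixed so that the geometric ratio $q^{-(1-\sigma)}$ is strictly less than $1$, the tail sum is bounded by a constant $C_\sigma$ independent of $N$, and the ordered chain of choices $\sigma\to C_\sigma\to(\epsilon,N_0)\to M_1$ is non-circular exactly as you describe. The base case on the block $N\le 2N_0$ and the observation that \eqref{300b} reaches back only to index $N/2$ are handled correctly.

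The paper takes a genuinely different route. Rather than induct on $N$, it introduces an \emph{envelope sequence}
\[
b_N:=\sum_{m=0}^{N/2} q^{-m\delta}a_{N-m}
\]
with a fixed auxiliary exponent $\delta\in(0,1)$ (they take $\delta=1/2$), which dominates $a_N$ termwise. Splitting the sum defining $b_N$ at $l=N/4$, feeding the hypothesis \eqref{300b} into the near block, and re-indexing via $m=j+l$, the paper arrives at a self-referential estimate of the form $b_N\le C\epsilon\,b_N + M\,q^{-N\delta/8}$. Choosing $\epsilon$ small enough to hide the first term on the left then gives the bound on $b_N$---hence on $a_N$---in a single stroke, with no induction. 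Your approach is shorter and more transparent, and it actually yields a better exponent (any $\sigma$ in the admissible range, versus the paper's fixed $\delta/8$). The envelope-sequence technique is the more robust tool when the recursion reaches back through the \emph{entire} history $\{a_0,\dots,a_{N-1}\}$ rather than only the recent half, since in that setting a naive induction can fail to close; here the truncation at $m\le N/2$ makes that extra machinery unnecessary, and your direct argument is the more economical one.
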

We postpone the proof of the Lemma for the Appendix.

{\bf Remark:} 
\begin{enumerate}
	\item The point of the lemma is that, given only the boundedness of $\{a_N\}$ and the exponentially decaying dependence  from the neighbors, one can upgrade to  {\it some} exponential bound for the sequence itself.
\item We remark that from the proof, one can get at least $\si\geq \f{1}{8}$. 
\end{enumerate}

\subsection{Exponential decay of $\phi$}
  Since we have already shown that $\phi\in H^\infty(\rone)$, it follows that 
  $$ 
  \lim_{|x|\to \infty} \phi(x)=0.
  $$ 
  Next, introduce the positive bounded sequence $a_{N}:=\sup_{|x|\geq N}|\phi(x)|$. 
  
  Note that $\lim_N a_N=0$.  We would like to show that $a_N$ satisfies the assumptions of Lemma \ref{le:40b}, with $q=e^{-\f{\sqrt{2-c^2}}{2}}<1$,
   and hence we will have shown that it has some exponential decay. Our starting point is \eqref{230b}. It follows that 
   \begin{align}
   	\label{330b} 
   	a_N\leq \sup_x|\phi\psi_{N}(x)|& \leq |K\ast(\gamma F[\phi|^{2}]\phi\psi_{N}+E_N)|\\
   	& \leq  	C e^{-\de|\cdot|}*|\gamma F[\phi|^{2}]\phi|\psi_{N}+ |K\ast E_N(x)|, \nonumber
   \end{align}
where we introduced $\de:=\f{\sqrt{2-c^2}}{2}$, so that $|K(x)|\leq C e^{-\de |x|}$ from Lemma \ref{le:30b}. 

We first estimate the term $e^{-\de|\cdot|}*|\gamma F[\phi^2]\phi|\psi_{N}$. At this juncture, we would like to note the estimate that, as $\|\phi\|_\infty < \infty,$
$$
|F[\phi^2]\phi| \leq \sum_{j=1}^n  b_j|\phi^{2j+1}| \leq M_{\|\phi\|_\infty}|\phi|^3.
$$
 We have 
 \begin{align*}
  e^{-\de|\cdot|}*|F[\phi^2]\phi|\psi_{N}  & \leq M \int e^{-\de|x-y|}|\phi|^3\psi_{N}(y) \ dy\\
 	& \leq  M \sum_{m=1}^{\left\lceil \frac{N}{2}\right\rceil }\int_{\left\lfloor \frac{N}{2}\right\rfloor +(m-1)\leq|y|\leq\left\lfloor \frac{N}{2}\right\rfloor +m} e^{-\de|x-y|} |\phi(y)|^{3}\ dy\\
 	& \qquad \qquad \qquad +M\int_{|y|>N}  e^{-\de|x-y|} |\phi|^{3}\ dy,
 \end{align*}
It follows that 
  \begin{align*}
    e^{-\de|\cdot|}*|F[\phi^2]\phi|\psi_{N} & \leq C\|\phi\|_{\infty} \sum_{m=1}^{\left\lceil \frac{N}{2}\right\rceil }e^{-\delta(\left\lceil N/2\right\rceil -m)}a_{\left\lfloor \frac{N}{2}\right\rfloor +(m-1)}\int_{\left\lfloor \frac{N}{2}\right\rfloor +(m-1)\leq|y|\leq\left\lfloor \frac{N}{2}\right\rfloor +m}|\phi|\ dy  \\
  	& \qquad\qquad+ a_N \int_{|y|>N}   |\phi|^2\ dy,\nonumber \\
  	& \leq C \sup_{|x|>N/2}|\phi(x)|^2 \sum_{m=1}^{\left\lceil \frac{N}{2}\right\rceil }e^{-\delta(\left\lceil N/2\right\rceil -m)}a_{\left\lfloor \frac{N}{2}\right\rfloor +(m-1)} + a_{N} \int_{|y|>N}   |\phi|^2\ dy. 
  \end{align*}
For the terms $|K\ast E_N(x)|$, we employ a different strategy. As we do not have any {\it a priori} bounds on the derivatives of $\phi$, we integrate by parts first, so that only terms with $\phi$ (and no derivatives) appear in the formula for $K\ast E_N(x)$. Specifically, in in the expansion of $K\ast E$, after integrating by parts an adequate
number of times\footnote{Note no more than three derivatives fall on $\phi$}, will be of the form, 
$$
\int K^{(i)}(x-y)\psi_{N}^{(j)}(y) \phi(y) \ dy, 0\leq i\leq 3, i+j=4.
$$
Note that $j\geq 1$, i.e. there is always at least one derivative falling on the cutoff, hence we always pickup $N^{-1}$ from it. 
We thus estimate 
\begin{align*}
	| \int K^{(i)}(x-y)\psi_{N}^{(j)}(y) \phi(y) \ dy| & \leq\sum_{m=1}^{\left\lceil \frac{N}{2}\right\rceil }\int_{\left\lfloor \frac{N}{2}\right\rfloor +(m-1)\leq|y|\leq\left\lfloor \frac{N}{2}\right\rfloor +m}|K^{(i)}(x-y)||\phi||\psi_{N}^{(j)}|\ dy  \\
	& \leq N^{-j}\sum_{m=1}^{\left\lceil \frac{N}{2}\right\rceil }Ce^{-\delta(\left\lceil N/2\right\rceil -m)}a_{\left\lfloor \frac{N}{2}\right\rfloor +(m-1)} .
\end{align*}
Now let $\epsilon>0$ be arbitrary, since $\lim_{|x|\to\infty}\phi(x)=0,$
we may choose an $N_{1},$ such that $|\phi(x)|<\epsilon$ for $|x|>\left\lfloor \frac{N_{1}}{2}\right\rfloor $,  also $\int_{|y|>N}   |\phi|^2\ dy\leq \eps$, if $N>N_2$, 
and $N_{3}: N_{3}^{-1}<\epsilon$. Let $N(\epsilon):=\max\{N_{1},N_{2}, N_3\}$. 

Then, we have 
\begin{eqnarray*}
	e^{-\de|\cdot|} \ast |F[\phi^2]\phi| \psi_{N}(x) &\leq & C \epsilon \sum_{m=1}^{\left\lceil \frac{N}{2}\right\rceil }Ce^{-\delta(\left\lceil N/2\right\rceil -m)}a_{\left\lfloor \frac{N}{2}\right\rfloor +(m-1)}+C_{1}\epsilon a_{N}, \\
 \left|\int K^{(i)}(x-y)\psi_{N}^{(j)}(y) \phi(y) \ dy \right| &\leq &  C\epsilon\sum_{m=1}^{\left\lceil \frac{N}{2}\right\rceil }e^{-\delta(\left\lceil N/2\right\rceil -m)}a_{\left\lfloor \frac{N}{2}\right\rfloor +(m-1)}. 
\end{eqnarray*}
All in all, for all $\eps>0$ (and large enough $N_0(\eps)$, and $N>N_0(\eps)$), by combining the estimates for the terms in \eqref{330b}, we arrive at the bound \eqref{300b} needed in Lemma \ref{le:30b}. Thus, we have exponential decay for $a_N$. Specifically, Lemma \ref{le:30b} implies that $a_N\leq C e^{-\de_1 N}$, for some $\de_1>0$.   Hence,  we have shown 
$$
|\phi(x)|\leq C e^{-\de_1|x|}. 
$$
We now bootstrap this to the decay rate of the kernel $K$. First, from the equation \eqref{20b}, we write the equivalent weak formulation
$$
	\phi= K\ast F[\phi^2]\phi. 
$$
Let $x: |x|>>1$. We have 
$$
|\phi(x)|\leq \int_{-\infty}^\infty  |K(x-y)| |F[\phi^2]\phi| dy \leq C \int_{-\infty}^\infty e^{-\de|x-y|} |\phi(y)|^3 dy.
$$
Using the {\it a priori}  decay bound $|\phi(x)|\leq C e^{-\de_1|x|}$, which we have just established,  we immediately upgrade (assuming $\de\neq 3\de_1$) from \eqref{324b}, 
$$
|\phi(x)|\leq  C \int_{-\infty}^\infty e^{-\de|x-y|} e^{-3 \de_1|x|}  dy\leq e^{-\min(\de, 3\de_1) |x|}.
$$
Continuing in this fashion, starting from a bound $e^{-\de_j|x|}$, we can upgrade\footnote{If it ever happens that $3\de_j=\de$, we can upgrade to $e^{-(\de-\eps) |x|}$, see the remark after \eqref{324b},  for any $\eps>0$ and then at the next step  to $e^{-\de|x|}$} to $e^{-\min(\de, 3\de_j)|x|}$, which is $e^{-\de_{j+1}|x|}$, where $\de_{j+1}=\min(\de, 3\de_j)$. Thus, after finitely many iterations, we will obtain the desired decay rate $e^{-\de|x|}$, which matches the decay of $K$. 

Similar argument applies to the derivatives of $\phi$. Indeed, 
$$
|\phi'(x)|\leq  |K'| \ast |F[\phi^2]\phi|, 
$$
and since by Lemma \ref{le:30b}, $|K'(x)|\leq C e^{-\de|x|}$, we arrive at the same bounds. Higher order derivatives are handled by an induction argument: (some of) the derivatives are put onto the nonlinear terms $\phi^3$, which continues to enjoy  the decay rate $e^{-3 \de|x|}$ by the induction hypothesis, etc.

\section{Fourth-order NLS}

The existence, stability and decay estimates for the traveling wave solution can be extended to solitary waves for a fourth-order NLS of the form \eqref{14b}.

Solitary wave of the form $u(x,t)=e^{-i\omega t}\phi(x)$, $\omega>0$ for a real-valued function $\phi$, satisfies the equation 
\begin{equation}\label{NLS_prof}
\phi_{xxxx} +\mu \phi_{xx} +\omega \phi -\phi^2 \phi = 0.
\end{equation}

Setting $u=e^{i \om t} (\phi+ v)$ and plug it in \eqref{14b}. After splitting in real and imaginary parts, $v=v_1+i v_2$,  and ignoring terms $O(v^2)$ and higher, we get the system

\begin{equation}
	\label{l:10b} 
	\vec{v}_t= \cj \cl \vec{v}, 
\end{equation} 
\begin{eqnarray*}
\cj &=& \left(\begin{array}{cc} 
	0 & -1 \\ 
	 1 & 0 
\end{array}\right), \ \ \cl=\left(\begin{array}{cc} 
\cl_+ & 0 \\ 
0 & \cl_-
\end{array}\right), \\
\cl_- &=& \p_{xxxx}+\mu \p_{xx}+\omega - \phi^2, \\
		\cl_+ &=& \p_{xxxx}+\mu \p_{xx}+\omega -3\phi^2.
\end{eqnarray*} 

Following the proofs for establishing existence and stability of traveling waves, we obtain the results, 
\begin{theorem}[Existence]\label{th:NLSexist}
Let $\mu<2\sqrt{\omega}$. Then the profile equation \eqref{NLS_prof} has a classical solution $\phi$, with $\phi \in H^\infty$ and exponential decay, 
$$
|\phi(x)|\leq C e^{- \f{\sqrt{2\sqrt{\omega}-\mu}}{2} |x|}, \  0<\mu<2\sqrt{\omega}.
$$
\end{theorem}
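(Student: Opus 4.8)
The plan is to transcribe the variational construction of Section 3 and the decay analysis of Section 4, with $c^2$ replaced by $\mu$ and the constant $1$ replaced by $\omega$ throughout. First I would set up the constrained maximization
$$
I[\phi]:=\int_{\rone}\tfrac{1}{2}|\phi|^4\,dx\to\max,\qquad L[\phi]:=\int_{\rone}(\phi'')^2-\mu(\phi')^2+\omega\,\phi^2\,dx=\la,
$$
which corresponds to the homogeneous nonlinearity $F(r)=r$, $G(r)=\tfrac12 r^2$. The one genuinely new ingredient at this stage is the coercivity of $L$: completing the square on the Fourier side gives $\xi^4-\mu\xi^2+\omega=(\xi^2-\tfrac{\mu}{2})^2+(\omega-\tfrac{\mu^2}{4})$, which is bounded below by a positive multiple of $\max(1,\xi^4)$ \emph{exactly} when $\mu<2\sqrt{\omega}$. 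This is the analogue of Lemma \ref{le:10b}, and the threshold $\mu<2\sqrt{\omega}$ plays precisely the role that $c<\sqrt2$ played before. With coercivity in hand, the embedding bounds \eqref{30b}--\eqref{34b} show $I$ is finite on the constraint set, so $M_\la:=\sup_{L[\phi]=\la}I[\phi]$ is well defined.

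Next I would run the concentration--compactness argument of Proposition \ref{prop:10b}. Here the homogeneity simplifies matters: since $L[\sqrt{\la'/\la}\,\phi]=\la'$ and $I[\sqrt{\la'/\la}\,\phi]=(\la'/\la)^2 I[\phi]$, one has $M_\la=c_0\la^2$ for a constant $c_0>0$, so the strict superlinearity $M_\la>M_\al+M_{\la-\al}$ of Lemma \ref{le:5b} follows immediately from $\al^2+(\la-\al)^2<\la^2$, and \eqref{12b} holds with equality. Vanishing and splitting are then excluded by the same two subarguments (the localization/GNS bound rules out vanishing; disjoint-support additivity together with \eqref{12b} rules out splitting), leaving tightness, which upgrades weak $H^2$ convergence of a maximizing sequence to strong convergence and produces a maximizer $U\in H^2(\rone)$.

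I would then derive the Euler--Lagrange equation exactly as in Proposition \ref{prop:20b}, obtaining $U''''+\mu U''+\omega U-\kappa^{-1}U^3=0$ with Lagrange multiplier $\kappa^{-1}>0$. Because the nonlinearity is homogeneous of degree three, the multiplier is superfluous (cf. the Remark in the Introduction): the rescaling $\phi=\kappa^{-1/2}U$ absorbs $\kappa^{-1}$ and yields a solution of \eqref{NLS_prof} exactly. Smoothness follows by the same bootstrap: since $\mu<2\sqrt{\omega}$ the multiplier $(\xi^4-\mu\xi^2+\omega)^{-1}$ is bounded and maps $H^s\to H^{s+4}$, so writing $\phi=(\p_{xxxx}+\mu\p_{xx}+\omega)^{-1}\phi^3$ and iterating gives $\phi\in H^\infty(\rone)$, which in particular makes $\phi$ a classical solution.

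The main obstacle, and the only computation that differs substantively from Section 4, is pinning down the sharp exponential rate $\tfrac{\sqrt{2\sqrt{\omega}-\mu}}{2}$. This requires re-proving the kernel estimate Lemma \ref{le:30b} for $\wh K(\xi)=\tfrac{1}{\sqrt{2\pi}}(\xi^4-\mu\xi^2+\omega)^{-1}$: the decay rate is the distance to the real axis of the pole of $K$ nearest it. Setting $z=\xi^2$, the roots of $z^2-\mu z+\omega$ have modulus $\sqrt{\omega}$ and argument $\pm\theta$ with $\cos\theta=\tfrac{\mu}{2\sqrt{\omega}}$; taking square roots, the poles $\xi=\pm\omega^{1/4}e^{\pm i\theta/2}$ have smallest imaginary part $\omega^{1/4}\sin(\theta/2)=\tfrac{\sqrt{2\sqrt{\omega}-\mu}}{2}$, which is exactly the claimed rate. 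With this kernel bound, the representation $\phi=K\ast\phi^3$ together with the long-range interaction Lemma \ref{le:40b} first yields \emph{some} exponential decay of $a_N:=\sup_{|x|\ge N}|\phi(x)|$, and the bootstrap via \eqref{324b} then upgrades it to the sharp rate $e^{-\frac{\sqrt{2\sqrt{\omega}-\mu}}{2}|x|}$, matching the kernel; the derivatives are handled by the same induction as in the proof of Theorem \ref{theo:30b}.
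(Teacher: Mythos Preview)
Your proposal is correct and follows precisely the route the paper indicates: the paper offers no independent proof of Theorem \ref{th:NLSexist} beyond the sentence ``Following the proofs for establishing existence and stability of traveling waves, we obtain the results,'' and your sketch carries out exactly that transcription, correctly identifying the two genuinely new computations --- the coercivity threshold $\mu<2\sqrt{\omega}$ from $\xi^4-\mu\xi^2+\omega\geq \omega-\tfrac{\mu^2}{4}>0$, and the location of the poles of the Green's kernel giving the sharp rate $\tfrac{\sqrt{2\sqrt{\omega}-\mu}}{2}$ --- as well as the homogeneity rescaling $\phi=\kappa^{-1/2}U$ that removes the Lagrange multiplier (which the paper's Remark in the Introduction alludes to but does not spell out for this case).
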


\begin{theorem}[Spectral Stability]\label{th:NLSstability}
Let $\mu<2\sqrt{\omega}$ and $\phi$ be the Solitary wave in the existence theorem. Assume also that the wave $\phi$ is non-degenerate, that is $\ker(\cl_+)=\spn[\phi']$. Then, the wave $e^{-i\omega t}\phi$ is spectrally stable, if and only if the Vakhitov-Kolokolov type condition holds 
	\begin{equation}
		\label{NLS:10} 
		\langle \cl_+^{-1}\phi,\phi \rangle < 0.
	\end{equation} 
\end{theorem}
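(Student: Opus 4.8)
The plan is to run the instability--index computation from the proof of Theorem~\ref{theo:20b} essentially verbatim for the NLS operators $\cl=\begin{pmatrix}\cl_+ & 0\\ 0 & \cl_-\end{pmatrix}$ and $\cj=\begin{pmatrix}0 & -1\\ 1 & 0\end{pmatrix}$, the one new feature being that $\cl_-$ now carries a nontrivial kernel. First I would record the spectral data supplied by the NLS analogue of Proposition~\ref{prop:20b} (established alongside Theorem~\ref{th:NLSexist}): $\cl_-\ge 0$ with $\cl_-[\phi]=0$ and a spectral gap on $\{\phi\}^\perp$, so that $\ker(\cl_-)=\spn[\phi]$; and $\cl_+$ has exactly one negative eigenvalue with $\cl_+[\phi']=0$, while the non-degeneracy hypothesis forces $\ker(\cl_+)=\spn[\phi']$. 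Hence $n(\cl)=n(\cl_+)+n(\cl_-)=1$, Corollary~\ref{cor:po} applies, and the entire problem collapses to determining $n(\cd)$ for the symmetric matrix $\cd_{ij}=\dpr{\cl\eta_i}{\eta_j}$ on a basis $\{\eta_i\}$ of $\gKer(\cj\cl)$.

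Second, I would compute $\gKer(\cj\cl)$. Since $\cj$ is invertible, $\ker(\cj\cl)=\ker(\cl)=\spn\{v_1,v_2\}$ with $v_1=\begin{pmatrix}\phi'\\0\end{pmatrix}$ (translation) and $v_2=\begin{pmatrix}0\\\phi\end{pmatrix}$ (phase). The first generalized eigenvectors solve $\cj\cl\,w=v_i$, i.e. $\cl w=\cj^{-1}v_i$ with $\cj^{-1}=\begin{pmatrix}0 & 1\\ -1 & 0\end{pmatrix}$. For $v_1$ this reads $\cl_-w_{1,2}=-\phi'$, solvable because $\phi'\perp\ker(\cl_-)$, giving $w_1=\begin{pmatrix}0\\ -\cl_-^{-1}\phi'\end{pmatrix}$; for $v_2$ it reads $\cl_+w_{2,1}=\phi$, solvable because $\phi\perp\ker(\cl_+)$, giving $w_2=\begin{pmatrix}\cl_+^{-1}\phi\\ 0\end{pmatrix}$. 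The decisive step is whether the two Jordan chains continue. The translation chain lengthens only if $\dpr{\cl_-^{-1}\phi'}{\phi'}=0$; but $\cl_-\ge 0$ with kernel $\spn[\phi]$ and $\phi'\in\{\phi\}^\perp$ force $\dpr{\cl_-^{-1}\phi'}{\phi'}>0$, so it terminates. The phase chain lengthens if and only if $\dpr{\cl_+^{-1}\phi}{\phi}=0$, which is precisely the vanishing of the VK quantity in \eqref{NLS:10}. Thus, off the critical value, $\gKer(\cj\cl)=\spn\{v_1,v_2,w_1,w_2\}$.

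Third, I would assemble $\cd$ in the basis $(v_1,v_2,w_1,w_2)$. Because $\cl v_1=\cl v_2=0$ the two kernel rows and columns vanish, and the block-diagonal form of $\cl$ kills the cross term $\dpr{\cl w_1}{w_2}$. The surviving entries are $\dpr{\cl w_1}{w_1}=\dpr{\cl_-^{-1}\phi'}{\phi'}>0$ and $\dpr{\cl w_2}{w_2}=\dpr{\cl_+^{-1}\phi}{\phi}$, so $\cd=\mathrm{diag}\big(0,0,\dpr{\cl_-^{-1}\phi'}{\phi'},\dpr{\cl_+^{-1}\phi}{\phi}\big)$. Since the third entry is strictly positive, $n(\cd)=1$ precisely when $\dpr{\cl_+^{-1}\phi}{\phi}<0$, and $n(\cd)=0$ when $\dpr{\cl_+^{-1}\phi}{\phi}>0$. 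Corollary~\ref{cor:po} then yields spectral stability iff $n(\cd)=1$, i.e. iff \eqref{NLS:10} holds; when it fails, the index formula \eqref{e:20} forces $k_r=1$ and hence instability. The borderline case $\dpr{\cl_+^{-1}\phi}{\phi}=0$ would be disposed of exactly as in the remark after Theorem~\ref{theo:20b}: the phase chain elongates, but the refined count $k_{\mathrm{Ham}}=n^-(\cl)-n^{\le 0}(\cd)$ still returns $k_{\mathrm{Ham}}=0$, so no instability appears at the transition.

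I expect the genuine obstacle to lie not in this bookkeeping, which is formally identical to the proof of Theorem~\ref{theo:20b}, but in the spectral input for $\cl_-$. In the wave problem the lower-right block of $\cl$ was simply the identity, whereas here $\cl_-$ is an honest fourth-order Schr\"odinger-type operator, and I must know that it is non-negative with kernel \emph{exactly} $\spn[\phi]$. Non-negativity cannot be extracted from a maximum-principle/Perron--Frobenius argument, since fourth-order operators admit none; it has to come from the second-variation inequality at the constrained maximizer (the analogue of \eqref{3110b}), and pinning the kernel down to a single dimension requires the accompanying spectral-gap argument. Once that input is secured, the sign $\dpr{\cl_-^{-1}\phi'}{\phi'}>0$, which is what removes the translational direction from the instability count, follows immediately.
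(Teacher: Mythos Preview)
Your proposal is correct and is precisely the adaptation the paper has in mind: the paper gives no separate proof of Theorem~\ref{th:NLSstability} beyond the sentence ``Following the proofs for establishing existence and stability of traveling waves, we obtain the results,'' so you have spelled out exactly what that sentence entails. The one genuine difference from the proof of Theorem~\ref{theo:20b} is, as you note, that the lower-right block of $\cl$ is now $\cl_-$ rather than the identity, which enlarges $\ker(\cl)$ to two dimensions and $\gKer(\cj\cl)$ to four; you have handled this correctly, and the spectral input on $\cl_-$ that you flag in your final paragraph is supplied verbatim by the variational argument of Proposition~\ref{prop:20b} (with $c^2\leadsto\mu$, $1\leadsto\omega$, $F(r)=r$), so no new work is required there.
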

\noindent 
{\bf Remark:} Here, again assuming the mapping $\omega\to \phi_\omega$ is Gateaux differentiable, we have 
$$
\langle \cl_+^{-1}\phi,\phi \rangle=-\p_\omega \|\phi_\omega\|^2.
$$
So, we need to check the function $\omega\to   \|\phi_\omega\|^2$. In the intervals where it increases, we have stability according to \eqref{NLS:10}.

\section{Numerical results}

First, we numerically compute the solution to
\begin{equation}\label{phinumerics}
    \alpha \phi_{xxxx} + \mu \phi_{xx} + \omega \phi - \phi^3,
\end{equation}
which is \eqref{20b} when $\alpha = 1$, $\mu = c^2$,  $\omega = 1$, $\gamma = 1$, and $F(x) = x$, and \eqref{NLS_prof} when $\alpha = 1$. To do this, we start with the known solitary wave solution for the second order NLS equation, which corresponds to $\alpha = 0$ in \eqref{phinumerics}. We gradually increase $\alpha$ to $\alpha = 1$, solving for $\phi$ at each step using a Newton conjugate-gradient method (\cite[chapter 7.2.4]{Yang}). Periodic boundary conditions are used for all computations. Once we have this initial solution, we use pseudo-arclength continuation with AUTO \cite{AUTO} to construct solutions for a wide range of parameters. To do this, we rewrite \eqref{phinumerics} as the Hamiltonian first order system 
\begin{equation}
\frac{du}{dx} = J H'(u),
\end{equation}
where
\begin{equation}
    u = (q_1, q_2, p_1, p_2) = \left(\phi, \phi_x, -\phi_{xxx} - \frac{\mu}{\alpha} \phi_x, \phi_{xx} \right),
\end{equation}
$J$ is the standard $4 \times 4$ symplectic matrix, and the Hamiltonian $H$ is given by
\begin{equation}
    H(u) = q_2 p_1 + \frac{1}{2} p_2^2 - \frac{\omega}{2 \alpha}q_1^2 + \frac{1}{4\alpha} q_1^4 + \frac{\mu}{2 \alpha} q_2^2.
\end{equation}
For parameter continuation with AUTO, we use the equation 
\begin{equation}
    \frac{du}{dx} = J H'(u) + \epsilon H'(u),
\end{equation}
where $\epsilon$ is a small, artificial parameter that breaks the Hamiltonian structure (see \cite[Section 3.1]{Champneys1997}). This is necessary since homoclinic orbits and periodic orbits in Hamiltonian systems are generally codimension zero phenomena, i.e., they persist as system parameters are varied; the additional parameter $\epsilon$ converts this into a codimension one problem.

For the beam equation \eqref{20b} with cubic nonlinearity, we obtain traveling wave solutions $\phi$ for $0 < c < 1.404$ using parameter continuation (this is just shy of the upper bound of $c = \sqrt{2}$ from Theorem \ref{theo:30b}). Two representative solutions to \eqref{20b} are shown in Figure \ref{fig:beamsolutions}.  To determine stability of these traveling wave solutions, following Remark 3 after Theorem \ref{theo:20b}, we plot $\|\phi_c'\|_{L^2}^2$ vs. $c$ (Figure \ref{fig:beamstablity}, solid blue line). From the plot, we see that the traveling wave solution is unstable for $0 < c < c^*$ and stable for $c > c^*$, where $c^* \approx 1.35$ is the value of $c$ where the curve attains its maximum. We obtain confirmation of this result by numerically computing the spectrum of the linearization about $\phi$. The eigenvalues $\lambda$ of \eqref{200b} are computed using a standard eigenvalue solver, and the differential operators are approximated using Fourier spectral differentiation matrices. A plot of the maximum real part of $\lambda$ vs. $c$ (Figure \ref{fig:beamstablity}, dotted orange line) shows that $\phi$ is spectrally unstable for $0 < c < c^*$ and stable for $c > c^*$. For $0 < c < c^*$, the linearization about $\phi$ has a pair of real internal mode eigenvalues (inset in Figure \ref{fig:beamsolutions}, left). As $c$ increases, these collide at the origin when $c = c^*$, after which the internal mode eigenvalues become purely imaginary (not shown).

\begin{figure}
    \centering
    \includegraphics[width=0.475\linewidth]{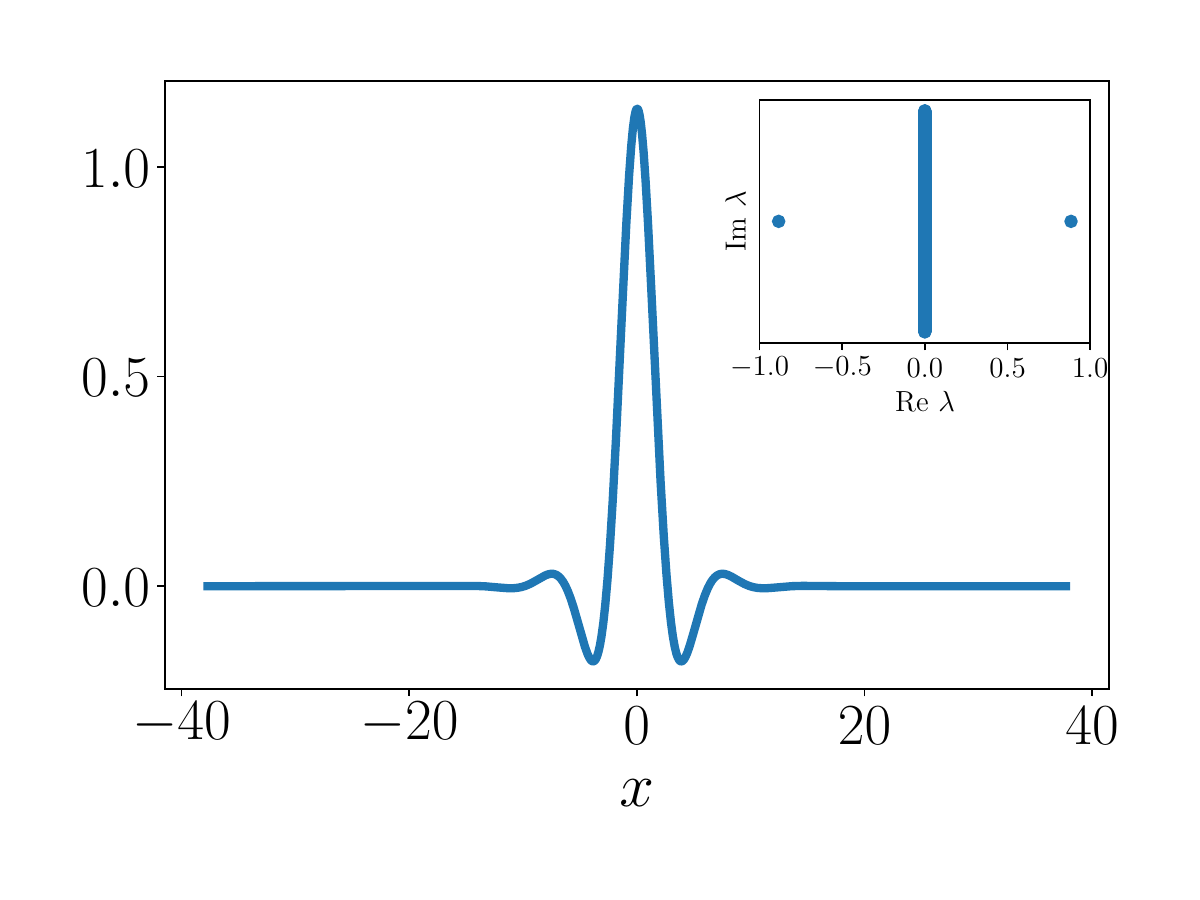}
    \includegraphics[width=0.475\linewidth]{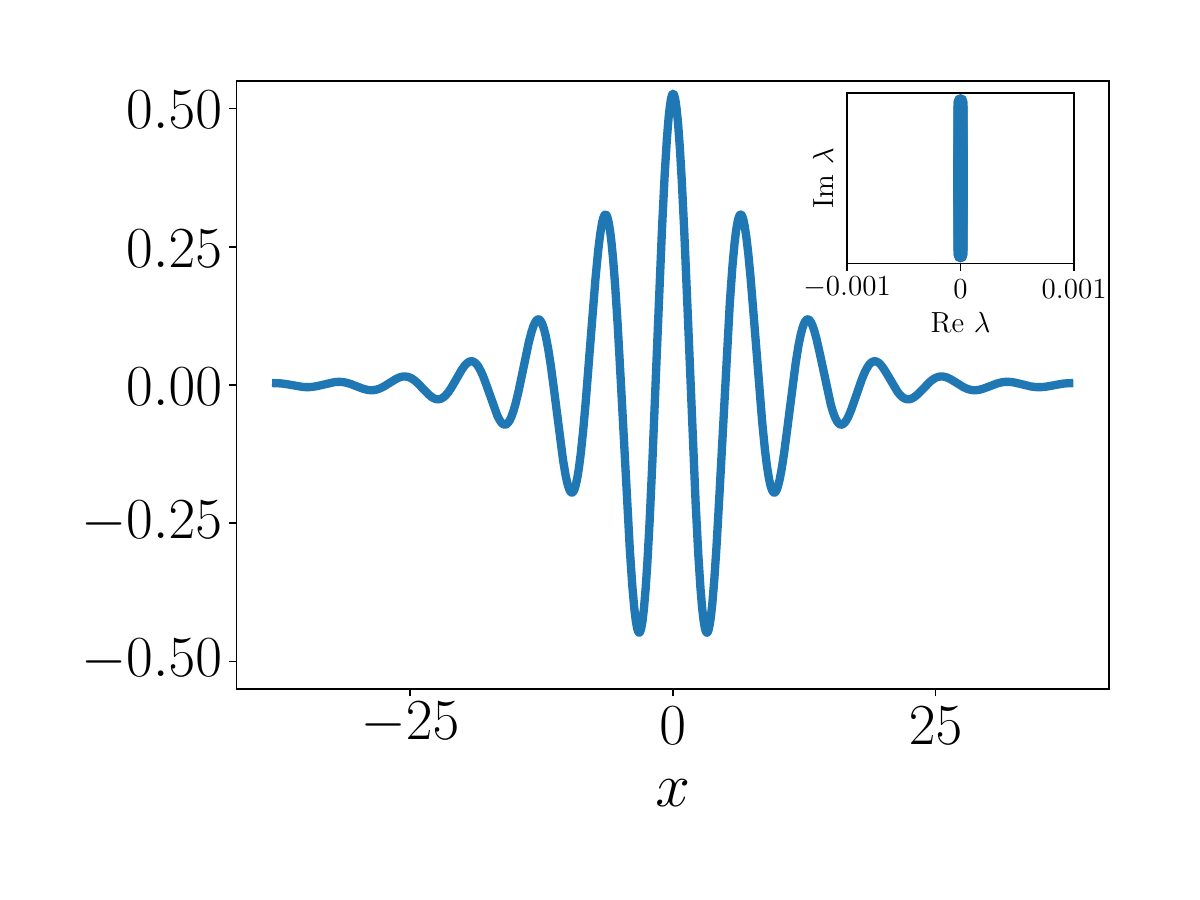}
    \caption{Traveling wave solutions $\phi$ to \eqref{20b} with cubic nonlinearity for $c = 1$ (left) and $c=1.375$ (right). 
    Inset shows spectrum of linearization \eqref{200b} about $\phi$. For $c=1$ (left), there is a pair of real internal mode eigenvalues at 
    $\lambda = \pm 0.885$ (see inset). For $c=1.375$ (right), there is a pair of imaginary internal mode eigenvalues at $\lambda = \pm 0.170 i$ (not shown in inset).
    Parameters: $F(x) = x$, $\gamma = 1$, $x \in [-12\pi, 12\pi]$ with periodic boundary conditions.}
    \label{fig:beamsolutions}
\end{figure}

\begin{figure}
    \centering
    \includegraphics[width=0.475\linewidth]{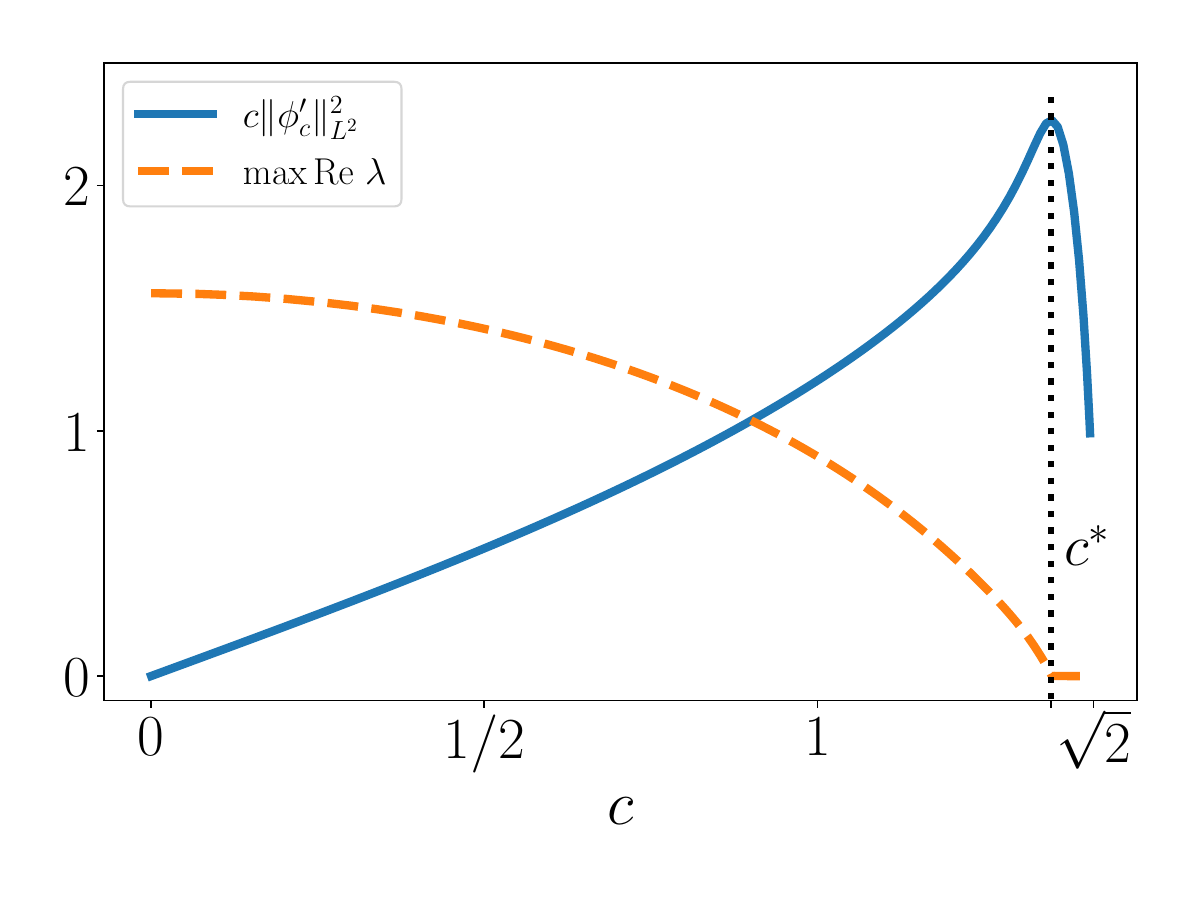}
    \caption{Plot of $c \|\phi_c'\|^2$ (blue solid line) and maximum real part of eigenvalue of linearization \eqref{200b} (orange dotted line) vs. $c$ for traveling wave solutions to \eqref{20b} with cubic nonlinearity. Solution changes from unstable to stable at $c = c^* \approx 1.35$ (black dotted line).    
    Parameters: $\gamma = 1$, $F(x) = x$, $x \in [-12\pi, 12\pi]$ with periodic boundary conditions.}
    \label{fig:beamstablity}
\end{figure}

To characterize the nature of the instability when $c < c^*$, we perturb the traveling wave solution by a small amount in the direction of the unstable eigenfunction (Figure \ref{fig:beamevol}, left). Results from timestepping experiments using this perturbation as an initial condition show that the peak of the traveling wave grows without bound as time evolves (Figure \ref{fig:beamevol}, right). For a timestepping scheme, we use a symplectic and symmetric implicit Runge–Kutta method \cite{Hairer1} (a Python implementation of the \texttt{irk} scheme of order 4 from \cite{Hairer2}) to preserve the symplectic structure of the Hamiltonian system. The Hamiltonian \eqref{eq:beamH} is conserved throughout the simulation.

\begin{figure}
    \centering
    \includegraphics[width=0.475\linewidth]{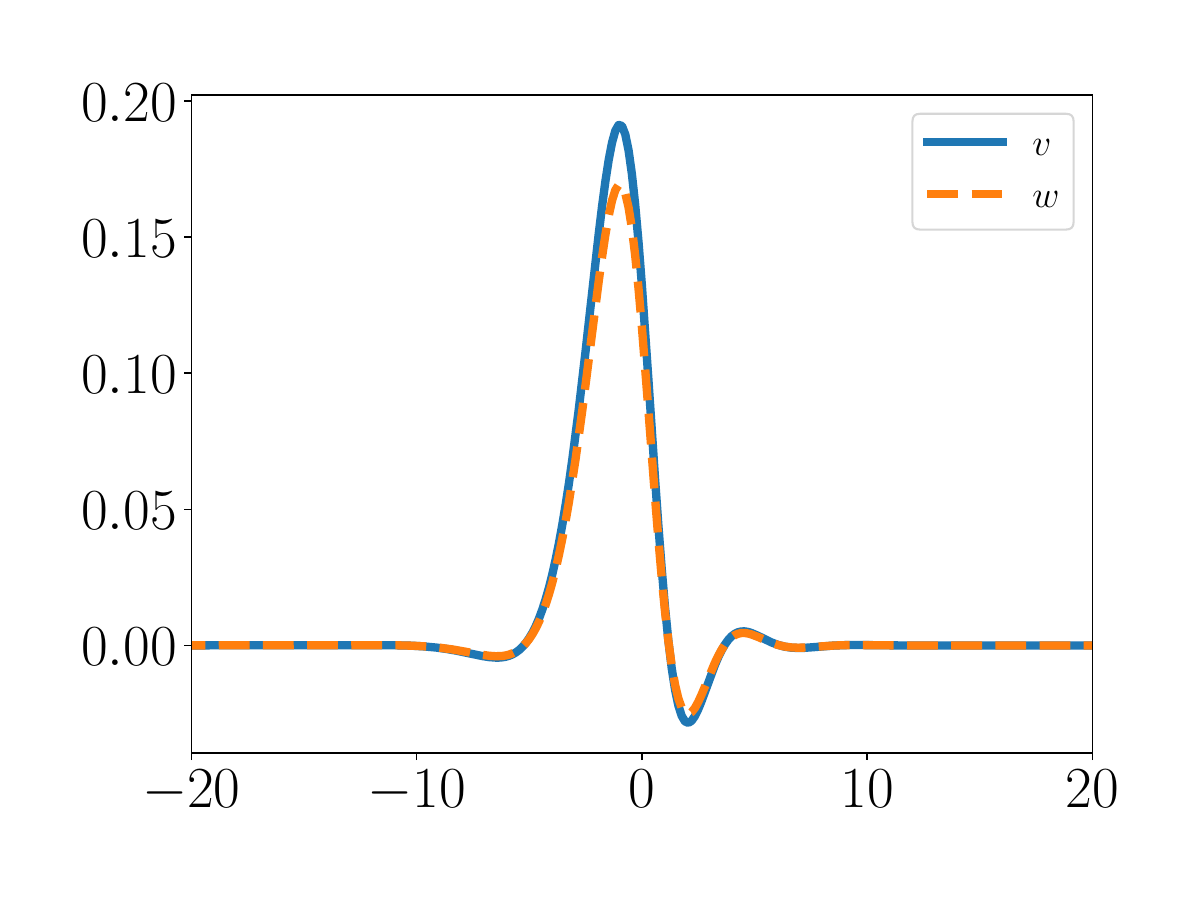}
    \includegraphics[width=0.475\linewidth]{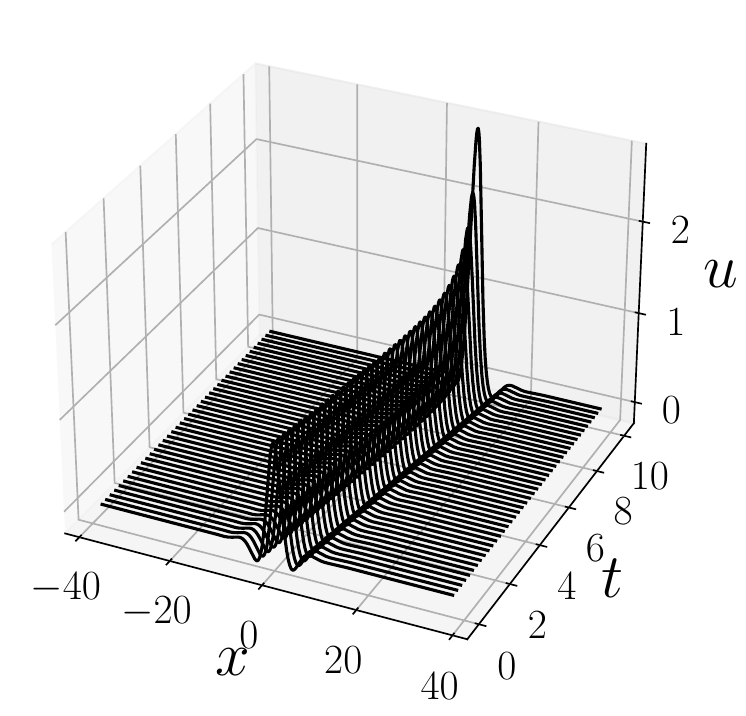}
    \caption{Eigenfunction $(v, w)^\top$ solution to \eqref{200b} corresponding to unstable internal mode eigenvalue $\lambda = 0.885$ for linearization about traveling wave solution to \eqref{20b} with $c = 1$ (left).
    Time evolution of small perturbation of traveling wave solution $\phi$ in direction of unstable eigenfunction (right). Initial condition $(\phi, -c\phi_x)^\top + \epsilon (v, w)^\top$, with $\epsilon = 0.001$.
    Parameters: $F(x) = x$, $\gamma = 1$, $x \in [-12\pi, 12\pi]$ with periodic boundary conditions.}
    \label{fig:beamevol}
\end{figure}


The following equation was introduced in \cite{ChenMcKenna} by Chen and McKenna as a smooth approximation of the model in \cite{McKennaWalter} describing waves propagating on an infinitely long suspended beam:
\begin{equation}\label{Chen}
    u_{tt} + u_{xxxx} + e^u - 1 = 0.
\end{equation}
Traveling wave solutions of the form $u(x, t) = \phi(x - ct)$ satisify the equation
\begin{equation}\label{ChenTravel}
    \phi_{xxxx} + c^2 \phi_{xx} + e^{\phi} - 1 = 0,
\end{equation}
and exist for almost all wavespeeds $c \in (0, \sqrt{2})$ \cite[Theorem 11]{Smets}. Linearization about a traveling wave solution yields the time-independent eigenvalue problem \eqref{200b}, where
\begin{equation}\label{ChenLplus}
    \mathcal{L}_+ = \partial_{xxxx} + c^2 \partial_{xx} + e^{\phi}.
\end{equation}
We first construct a solution $\phi$ to \eqref{ChenTravel} using the string method from \cite{Chamard}. We then continue in the wavespeed parameter $c$ with AUTO to compute solutions for $c \in (0.25, \sqrt{2})$. (We note that the norm of the solution grows significantly as $c$ decreases, hence it was difficult to continue the solution below $c = 0.25$). Since a plot of $c \|\phi_c'\|^2$ vs. $c$ (Figure \ref{fig:NLSstability}, left) is decreasing, this suggests that the VK condition \eqref{20b} is satisfied for all $c$. Furthermore, numerical eigenvalue computation of the linearization about these traveling wave solutions suggests that they are spectrally stable for all $c$. While the results of this paper do not directly apply here, since the nonlinearity $e^u - 1$ in \eqref{Chen} is not of the form in \eqref{eq:main}, these numerical experiments suggest that the theory could be generalized to include this case.

Finally, for the fourth-order NLS equation \eqref{NLS_prof}, we continue in $\omega$ with $\mu$ fixed. For $\mu = 1$, a plot of $\|\phi_\omega\|^2$ vs. $\omega$ (Figure \ref{fig:NLSstability}, right) shows that the solution $\phi$ exists for $\omega > \mu / 4$, in agreement with Theorem \ref{th:NLSexist}. Furthermore, since this function is increasing, it follows from Theorem \ref{th:NLSstability} and the remarks thereafter that this solution is stable.

\begin{figure}
    \centering
    \includegraphics[width=0.475\linewidth]{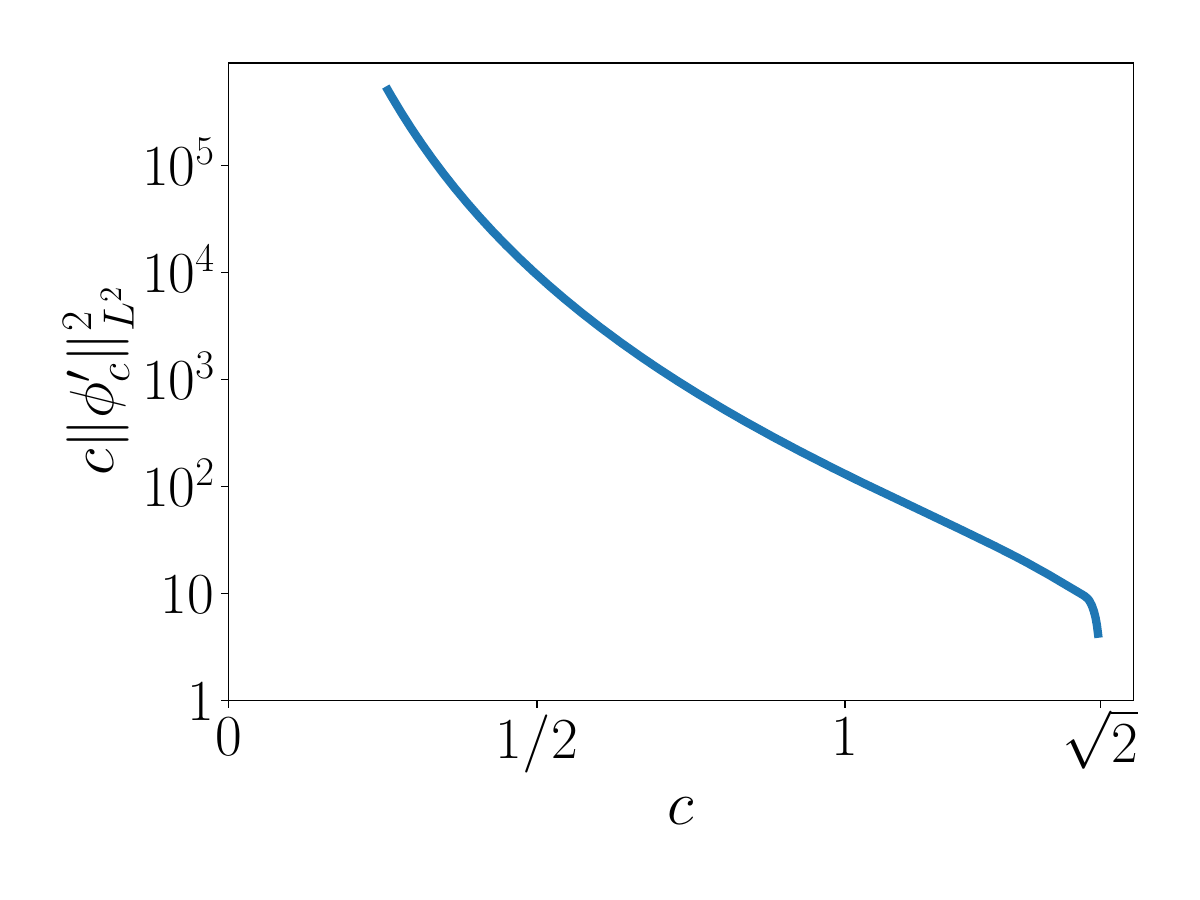}
    \includegraphics[width=0.475\linewidth]{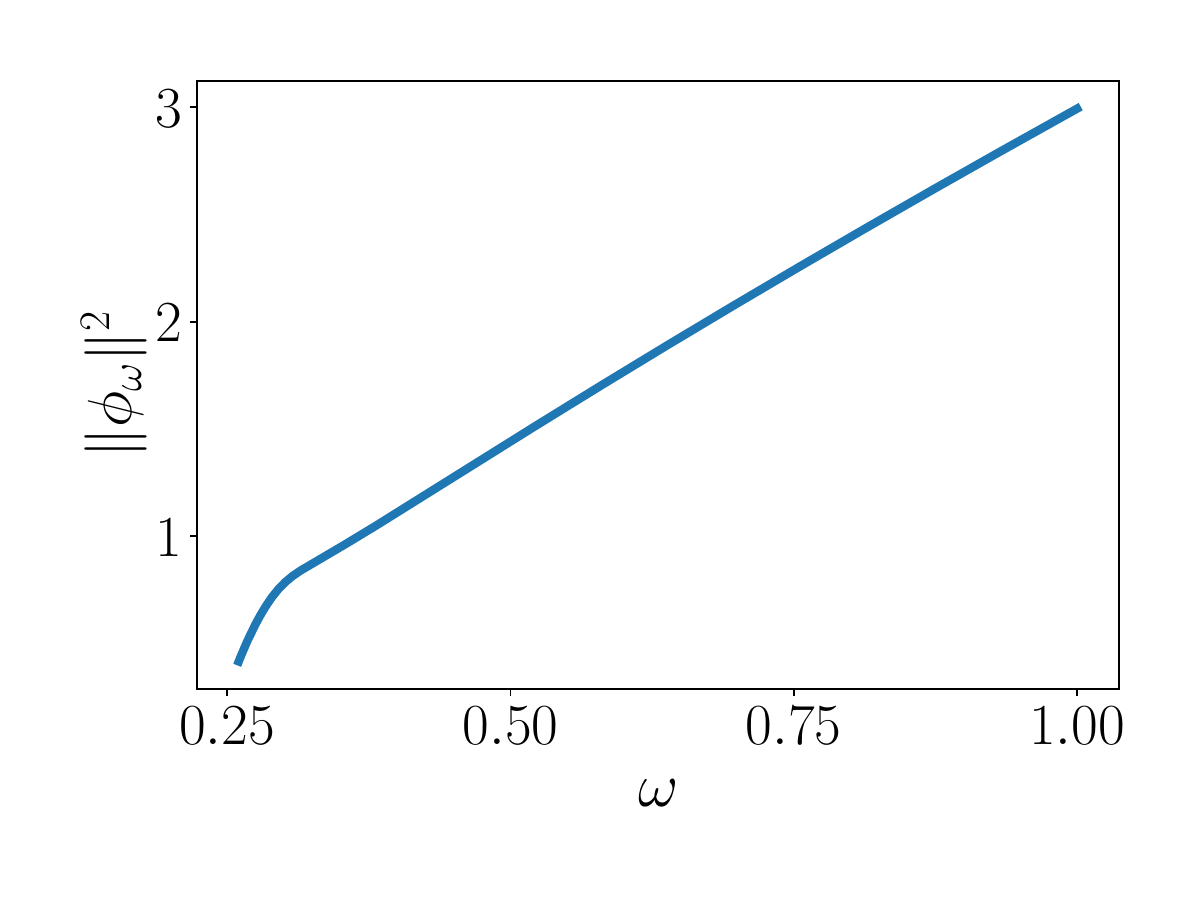}
    \caption{
    Left: semilog plot of $c \|\phi_c'\|^2$ vs. wavespeed $c$ for traveling wave solutions to \eqref{ChenTravel}; parameters are $x \in [-100, 100]$ with periodic boundary conditions.
    Right: plot of $\|\phi_\omega\|^2$ vs. frequency $\omega$ for solutions to \eqref{NLS_prof}; parameters are $\mu = 1$, $x \in [-12\pi, 12\pi]$ with periodic boundary conditions.}
    \label{fig:NLSstability}
\end{figure}

 \appendix
 
 \section{Proof of Lemma \ref{le:30b}} 
 \noindent We need estimates for 
 $$
\int_{-\infty}^\infty  \frac{\xi^je^{i \xi x} }{\xi^{4}-c^{2}\xi^{2}+1} d\xi, j=0,1,2,3.
 $$
 By the partial fraction decompositions, 
 \begin{eqnarray*}
 \frac{1}{\xi^{4}-c^{2}\xi^{2}+1} &=& \frac{\frac{\xi}{2\sqrt{2+c^{2}}}+\frac{1}{2}}{\xi^{2}+\sqrt{2+c^{2}}\xi+1}+\frac{-\frac{\xi}{2\sqrt{2+c^{2}}}+\frac{1}{2}}{\xi^{2}-\sqrt{2+c^{2}}\xi+1}, \\
 	\frac{\xi}{\xi^{4}-c^{2}\xi^{2}+1} &=&  \frac{-\frac{1}{2\sqrt{2+c^{2}}}}{\xi^{2}+\sqrt{2+c^{2}}\xi+1}+\frac{\frac{1}{2\sqrt{2+c^{2}}}}{\xi^{2}-\sqrt{2+c^{2}}\xi+1} \\ 
 	\frac{\xi^{2}}{\xi^{4}-c^{2}\xi^{2}+1} &=& \frac{-\frac{\xi}{2\sqrt{2+c^{2}}}}{\xi^{2}+\sqrt{2+c^{2}}\xi+1}+\frac{\frac{\xi}{2\sqrt{2+c^{2}}}}{\xi^{2}-\sqrt{2+c^{2}}\xi+1}\\
	\frac{\xi^{3}}{\xi^{4}-c^{2}\xi^{2}+1} &=& \frac{\frac{\xi}{2}+\frac{1}{2\sqrt{2+c^{2}}}}{\xi^{2}+\sqrt{2+c^{2}}\xi+1}+\frac{\frac{\xi}{2}-\frac{1}{2\sqrt{2+c^{2}}}}{\xi^{2}-\sqrt{2+c^{2}}\xi+1},
 \end{eqnarray*}
we see that we need estimates of the integrals 
$$
\int_{-\infty}^\infty  \frac{\xi^j e^{i \xi x} }{\xi^{2}\pm \sqrt{2+c^{2}}\xi+1} d\xi, j=0,1.
$$
 By completing the square, matters reduce to bounds for 
 $$
 \int_{-\infty}^\infty \f{\eta^j e^{i \eta x}}{\eta^2+b^2} d\eta, b=\f{\sqrt{2-c^2}}{2}, 
 $$
 which in turn reduces to precise bounds for 
 $$
 \int_{0}^\infty \f{\cos( z x)}{z^2+1} dz,  \int_{0}^\infty \f{z\sin( z x)}{z^2+1} dz.
 $$
 However, the last two integrals are explicit, namely 
 \begin{eqnarray*}
 	 \int_{0}^\infty \f{\cos( z x)}{z^2+1} dz &=&  \f{\pi}{2} e^{-|x|}, \\
 	\int_{0}^\infty \f{z\sin( z x)}{z^2+1} dz&=&  \f{\pi}{2} sgn(x) e^{-|x|},
 \end{eqnarray*}

 
 \section{Proof of Lemma \ref{le:40b}} 
 	Let $M: a_N<M$. 
 Introduce the so-called {\it  envelope sequence}, 
 $$
 b_N:=\sum_{m=0}^{N/2} q^{-m\delta } a_{N-m}.
 $$
 Here $\sum_{m=0}^{A}$ means as long as $0\leq m\leq A$. 
 Clearly, it suffice to show the exponential decay bounds for $b_N$, as $a_N\leq b_N$.
 
  For fixed $\epsilon$, let $N: N>10 N_0(\epsilon)$. Also, fix $\delta\in (0,1)$, say $\delta=1/2$. From \eqref{300b}, we have the bound for every $l: 0<l\leq N/4$, 
  \begin{equation}
  	\label{400b}
  	q^{-l \delta} a_{N-l}\leq \epsilon 	q^{-l \delta} \sum_{j=0}^{(N-l)/2} q^{-j} a_{N-l-j}.
  \end{equation}
 We claim, that within this sum,  either $N-l-j\geq N/2$ or  $j\geq (N-l)/4$. Otherwise $N-l-j< N/2, j< (N-l)/4$ and so 
 $
 N/2-l<j<(N-l)/4,
 $
 whence 
 $
 N<3l<\f{3 N}{4},
 $
 and we reach a contradiction. So, we can estimate in \eqref{400b}, 
 \begin{eqnarray*}
 	q^{-l \delta} a_{N-l}\leq \epsilon 	q^{-l \delta} \sum_{j: N-l-j\geq N/2} q^{-j} a_{N-l-j}+ \sum_{N>j \geq (N-l)/4}  M q^{-j} \\
 	\leq 	\epsilon 	q^{-l \delta} \sum_{j: N-l-j\geq N/2} q^{-j} a_{N-l-j}+ C M q^{-N/8}, 
 \end{eqnarray*}
 as in the second sum $j>\f{3N}{16}>\f{N}{8}$. Thus, using the bound \eqref{400b}
 \begin{eqnarray*}
 	b_N &=& \sum_{l=0}^{N/2} q^{-l\delta } a_{N-l}\leq \sum_{l=0}^{N/4} q^{-l\delta } a_{N-l}+\sum_{l=N/4}^{N/2} q^{-l\delta } a_{N-l}\\
 	&\leq & \sum_{l=0}^{N/4} q^{-\delta l} a_{N-l}+ 
 	M q^{-N\delta /8} \leq \epsilon \sum_{l=0}^{N/4} \sum_{j: N-l-j\geq N/2}  q^{-l \delta} e^{-j} a_{N-l-j}+M q^{-N\delta /8} \\
 	&\leq &  \epsilon \sum_{m=0}^{N/2} q^{-m} a_{N-m} \sum_{l=0}^m q^{(1-\delta)l} +M q^{-N\delta /8} \leq C\epsilon  \sum_{m=0}^{N/2} q^{-m\delta} a_{N-m}+M q^{-N\delta /8}\\
 	&\leq& C\epsilon b_N+M q^{-N\delta /8},
 \end{eqnarray*}
 where we have employed the substitution  $m=j+l$ and $\sum_{l=0}^m q^{(1-\delta)l} \leq C q^{(1-\delta)m}$. As this estimate is possible for every $\epsilon$, choose $\epsilon: C\epsilon<1/2$, and we can clearly turn this into an estimate for $b_N$ by hiding, 
 \begin{equation}
 	\label{405b}
 	b_N\leq C q^{-N\delta/8}.
 \end{equation}


\end{document}